\DeclareFontShape{T1}{calligra}{m}{it}{<-> s*[1.07] callig15}{}
\newcommand{\CC}{{\mathbb{C}}}
\newcommand{\QQ}{{\mathbb{Q}}}
\newcommand{\RR}{{\mathbb{R}}}
\newcommand{\ZZ}{{\mathbb{Z}}}
\newcommand{\smvee}{\raise0.9ex\hbox{$\scriptscriptstyle\vee$}}
\newcommand{\Ss}[1]{\mathcal{O}_{#1}}
\newcommand{\Rs}{\tilde{X}}
\newcommand{\Sf}[1]{\mathcal{#1}}
\newcommand{\dimc}[1]{\dim_{\mathbb{C}}\left(#1\right)}
\DeclareMathOperator{\im}{Im}
\DeclareMathOperator{\Exts}{\mathscr{E}\text{\kern -3pt {\calligra\large xt}}\,}
\DeclareMathOperator{\Homs}{\mathscr{H}\text{\kern -3pt {\calligra\large om}}\,}
\DeclareMathOperator{\Hom}{\text{Hom}\,}
\DeclareMathOperator{\Ext}{\text{Ext}\,}
\DeclareMathOperator{\Ders}{\mathscr{D}\text{\kern -3pt {\calligra\large er}}\,}
\DeclareMathOperator{\Der}{Der\,}
\newtheorem{theorem}{Theorem}[section]
\newtheorem{proposition}[theorem]{Proposition}
\newtheorem{corollary}[theorem]{Corollary}
\newtheorem{definition}[theorem]{Definition}
\newtheorem{remark}[theorem]{Remark}
\newtheorem{question}[theorem]{Question}
 \newtheorem*{theorem*}{Theorem}
 \newtheorem*{corollary*}{Corollary}
\newtheorem{question*}{Question}
\title[The blow-up of a singularity at the module of derivations]{The blow-up of a singularity at the module of derivations}
\author[P.~Barajas]{Paul Barajas}
\address{Instituto de Matem\'aticas, Unidad Cuernavaca, Universidad Nacional Aut\'onoma
de M\'exico, Avenida Universidad s/n, Colonia Lomas de Chamilpa, Cuernavaca,
Morelos, México}
\curraddr{}
\email{paul.barajas@im.unam.mx}
\thanks{}
\author[E.~Chávez-Martínez]{Enrique Chávez-Martínez}
\address{Instituto de Ingeniería y Tecnología, UACJ, Ciudad Juárez, Chihuahua, México}
\curraddr{}
\email{enrique.chavez@uacj.mx}
\thanks{}
\author[A.~Romano Vel\'azquez]{Agust\'in Romano-Vel\'azquez}
\address{Instituto de Matem\'aticas, Unidad Cuernavaca, Universidad Nacional Aut\'onoma
de M\'exico, Avenida Universidad s/n, Colonia Lomas de Chamilpa, Cuernavaca,
Morelos, Mexico}
\curraddr{}
\email{agustin.romano@im.unam.mx}
\thanks{}
\date{}
\begin{document}
\begin{abstract}
    We study the problem of resolving singularities via the blow-up of the module of derivations. Our main results are a positive answer for the case of curves and log-canonical surface singularities, i.e., a finite sequence of blow-ups along the module of derivations resolves the singularities of such varieties. 
\end{abstract}
\maketitle

\section*{Introduction}
The problem of resolving singularities has long been a central topic in algebraic geometry. The foundational result in this area is Hironaka’s theorem, which establishes the existence of a resolution of singularities in characteristic zero \cite{Hiro1}. The construction in Hironaka’s theorem is based on a sequence of choices of center of blowups, which raises the natural question whether a simple global and canonical procedure for constructing the resolution can be found. While positive answers are known in specific situations, no general method exists for an arbitrary variety.

Among the approaches developed over the years, the Nash blow-up—introduced by J. Nash and, independently, by J. G. Semple—stands out as a conceptually natural transformation, replacing the singular points of a variety by limits of tangent spaces. The first natural question is whether iterating the Nash blow-up, possibly alternating with normalization, yields a resolution of singularities. The answer is affirmative for curves \cite{Nob}, for surfaces when alternated with normalization \cite{Mark}, and for other families of varieties over fields of characteristic zero \cite{Atana, D, DJ, Hiro2, GT, GS1, GS2, Reba}. Nevertheless, recent work has produced counterexamples in dimensions four and higher \cite{CDLL}. Hence, the general problem of finding a simple canonical procedure for constructing a resolution of singularities remains open.

In this article, we propose a different approach based on the observation that the Nash blow-up is closely related to the module of Kähler differentials. In fact, the Nash blow-up can be described as the blow-up of the ideal of minors of a suitable submatrix of the Jacobian matrix\cite{Nob}. In this work, we propose to study the blow-up of the dual of the module of Kähler differentials, namely the module of derivations. Our main objective is to study the following question:
\begin{question*}
    Let \(X\) be an algebraic variety over a perfect field \(k\). Suppose that we iteratively blow-up of \(X\) at the module \(\Ders(\Ss{X})\) of derivations. Does this process lead to a resolution of singularities after finitely many steps?
\end{question*}

 This gives rise to a new framework where one investigates whether such blow-ups can provide a resolution of singularities. It is important to remark that our main problem is related to the Zariski-Lipman conjecture: Let $X$ be a variety over a perfect field $k$ of any characteristic. The conjecture claims that if $\mathrm{Der}_{\CC}(\Ss{X})$ is free, then $X$ is necessarily smooth. Notice that if there is a counterexample to the Zariski-Lipman conjecture, then the answer to the previous question is negative. 

Our first result is a Nobile-like theorem (see Theorem \ref{Th:Nobile}):
\begin{theorem*}
    Let $k$ be a perfect field and $X$ be an irreducible algebraic normal variety over $k$. Let
    \begin{equation*}
        f \colon \mathrm{Bl}_{\Der}(X) \to X,
    \end{equation*}
    be the blow-up of $X$ at the module $\Ders(\Ss{X})$. Assume that the Zariski-Lipman conjecture holds. If the blow-up morphism $f$ is an isomorphism, then $X$ is non-singular.
\end{theorem*}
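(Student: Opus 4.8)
The plan is to run Nobile's argument with $\Omega^1_{X/k}$ replaced by its dual. In outline: from the construction of the blow-up of a module, extract the property that on $\mathrm{Bl}_{\Der}(X)$ the pulled-back module $\Ders(\Ss{X})$ becomes locally free (modulo torsion) of rank equal to its generic rank; then use that $f$ is an isomorphism to push this local freeness down to $X$ itself; and finally invoke the Zariski--Lipman conjecture to conclude smoothness. Throughout, write $d=\dim X$.

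First I would record the relevant properties of the module being blown up. Since $X$ is irreducible and normal, $\Ders(\Ss{X})=\Homs(\Omega^1_{X/k},\Ss{X})$ is reflexive, hence torsion-free, and its generic rank equals $d$ because over the smooth locus it agrees with the rank-$d$ tangent sheaf. Torsion-freeness and the constancy of the generic rank are exactly the inputs that make the universal property of the module blow-up usable here.

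The central step is formal. By the defining property of the blow-up of a module---the analogue of the statement that the inverse image ideal sheaf of an ideal becomes invertible after blowing up---the pullback of $\Ders(\Ss{X})$ along $f$, taken modulo torsion, is locally free of rank $d$ on $\mathrm{Bl}_{\Der}(X)$. If $f$ is an isomorphism, transporting this sheaf back along $f^{-1}$ identifies it with $\Ders(\Ss{X})$ modulo torsion on $X$; as $\Ders(\Ss{X})$ is already torsion-free, it follows that $\Ders(\Ss{X})$ is itself a locally free $\Ss{X}$-module of rank $d$. Equivalently, for every point $x\in X$ the module $\Der(\Ss{X,x})$ is free over the local ring $\Ss{X,x}$.

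Finally, because $k$ is perfect, regularity and smoothness coincide pointwise, so once the Zariski--Lipman conjecture is assumed, freeness of $\Der(\Ss{X,x})$ at every $x$ forces each $\Ss{X,x}$ to be regular, i.e. $X$ is smooth, which is the contrapositive of the stated assertion. I expect the main obstacle to lie in the central step: one must be sure that the blow-up used here is $\mathrm{Proj}$ of the Rees algebra of $\Ders(\Ss{X})$ and that its universal property really makes the pulled-back module (and not merely some quotient, or some associated ideal of minors) locally free, taking due care of torsion and of any auxiliary embedding $\Ders(\Ss{X})\hookrightarrow\Ss{X}^{\,n}$ used to define the Rees algebra. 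Normality enters precisely to secure reflexivity, hence torsion-freeness, of $\Ders(\Ss{X})$ and to make its generic rank a stable invariant.
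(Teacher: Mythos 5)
Your proposal is correct and follows essentially the same route as the paper's proof: invoke Villamayor's Theorem~\ref{Th:BlowUpM} to get that $f^*(\Ders(\Ss{X}))/\mathrm{tor}$ is locally free, transport this back along the isomorphism $f$, use reflexivity (hence torsion-freeness over the domain $\Ss{X}$) to conclude $\Ders(\Ss{X})$ itself is locally free, and finish with the Zariski--Lipman conjecture. The concern you flag about the universal property is exactly what the paper's citation of \cite[Theorem~3.3]{Villa} settles, so no gap remains.
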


It is known that any log-canonical surface singularity is either rational, simple elliptic, or cusp (see, e.g., \cite[Example 6.3.33(d)]{NeBook}). Our main result is an affirmative answer for the case of complex analytic log-canonical surface singularities:
\begin{theorem*}
Let $(X,x)$ be a complex analytic normal surface singularity belonging to one of the following classes: rational singularities, simple elliptic singularities, or cusp singularities. In the rational case, $X$ admits a resolution by a finite iteration of blow-ups at the module of derivations. Furthermore, the resolution is the minimal resolution of singularities. In the simple elliptic case, the normalized blow-up at the module of Zariski differentials $\left(\Omega^1_X\right)^{\smvee \smvee}$ is the minimal resolution. In the cusp case, the normalized blow-up at the module of Zariski differentials or at the module of derivations has only rational singularities; these remaining singularities are then resolved by the rational case.
\end{theorem*}
The guiding idea behind the proof is the geometric McKay correspondence for
rational surface singularities, in the form developed by Artin-Verdier \cite{AV}, Esnault \cite{Es}, Riemenschneider \cite{Rie}
and Wunram \cite{Wu}: indecomposable special reflexive $\mathcal{O}_X$-modules
correspond bijectively to the irreducible components of the exceptional divisor of
the minimal resolution $\pi \colon \Rs \to X$, the reflexive $\mathcal{O}_X$-module $M$ being matched with the component $E_j$ for
which its first Chern class $c_1\bigl((\pi^*M)^{\vee\vee}\bigr)$ satisfies $c_1\bigl((\pi^*M)^{\vee\vee}\bigr)\cdot E_j = 1$. This correspondence is what
makes our method effective: by Theorem~\ref{Theo:IntroBlowUpMMinAdapt} each normalized blow-up of the
module of derivations contracts precisely the components $E_j$ on which $c_1\bigl((\pi^*M)^{\vee\vee}\bigr)\cdot E_j = 0$, so the correspondence determines, at each step, which
exceptional curves survive. This is the mechanism underlying the case-by-case analysis
of Sections~\ref{Sec:An} and~\ref{Sec:LogC} and the reason the iteration
terminates at the minimal resolution.
The principal obstruction to generalising our result beyond the log-canonical setting is the lack of control of the behaviour of the module of derivations under pullback along blow-ups.

It is important to remark that recent work has shown that iterated Nash blow-ups do not provide a resolution of singularities in dimensions four and higher (see~\cite{CDLL}). The main obstruction comes from the existence of affine charts of the Nash blow-up that are isomorphic to the original variety, preventing any progress under iteration. These counterexamples highlight a fundamental limitation of Nash’s construction, motivating the search for alternative approaches.

The paper is structured as follows. Section \ref{Sec:Pre} reviews the necessary preliminaries on reflexive modules, blow-ups of coherent sheaves, and Kähler differentials. In Section \ref{Sec:Nobile}, we establish a Nobile-like theorem for the blow-up of the module of derivations, and as a direct consequence, we show that a finite sequence of such blow-ups resolves curve singularities. Section \ref{Sec:An} presents an explicit computation of the resolution of $A_n$ singularities via this method, while Section \ref{Sec:LogC} addresses the more general case of log-canonical surface singularities.

\section{Preliminaries}
\label{Sec:Pre}
In this section, we recall basics on full sheaves over normal surface singularities, the blow-up of coherent sheaves, and the Kähler differentials. See~\cite{BrHe,Ne,Ishi} for more details on modules and normal surface singularities.

\subsection{Setting and notation} Throughout this article, we denote by $(X,x)$ the germ of an irreducible complex analytic normal surface singularity, i.e., the germ of a complex surface such that its local ring of functions $\Ss{X,x}$ is integrally closed in its field of fractions. In this situation $X$ has a \emph{dualizing sheaf} $\omega_X$, and let $\omega_{X,x}$ be its stalk at $x\in X$, which is called the \emph{dualizing module} of the ring $\Ss{X,x}$ (see~\cite[Chapter~5~\S~3]{Ishi} for more details).
\begin{definition}
    Let $(X,x)$ be the germ of a complex analytic normal surface singularity. We say that $(X,x)$ has a \emph{Gorenstein} normal singularity if the dualizing module is isomorphic to $\Ss{X,x}$. 
\end{definition}
We denote by $X$ a small representative of the germ $(X,x)$, and by  $U$ the \emph{regular part} of $X$, i.e., $U=X \setminus \{x\}$.

Moreover, in this article, we denote by $(C,0)$ the germ of an irreducible complex analytic curve singularity. 

\subsection{Good resolutions and dual graphs}
Let $(X,x)$ be as before. Let
\begin{equation*}
    \pi \colon \Rs\to X
\end{equation*}
be a \emph{resolution of} $(X,x)$, i.e., a proper holomorphic map from a smooth surface $\Rs$ to a given representative of $(X,x)$ such that $\pi$ restricted to the complement of
 $\pi^{-1}(x)$ is biholomorphic. Sometimes we will require $\pi$ to be a \emph{good resolution}, which means that the exceptional divisor  $E:=\pi^{-1}(x)$ is a normal crossing divisor and each irreducible component of
$E$ is smooth. For any normal surface singularity there is always a good resolution, however, it is
not unique.
The \emph{intersection matrix} $M=(m_{u,v})_{u,v \in V}$ associated to the dual graph $\Gamma$ is the intersection matrix of the curves $\{E_v\}_{v\in V}$, i.e., $m_{u,v}=(E_u,E_v)$.
It is negative definite.
\begin{definition}
The {\em geometric genus} of $X$ is defined to be the dimension as a $\mathbb{C}$-vector space of $H^1(\Rs,\Ss{\Rs})$ for any resolution. We denote the geometric genus by $p_g$.
\end{definition}

\subsection{Cohen-Macaulay modules and reflexive modules}
\label{subsec:MCM}
Let $X$ be a normal variety. Let $\Homs_{\Ss{X}}(\bullet,\bullet)$ be the sheaf theoretic Hom functor. 
The dual of an $\Ss{X}$-module $M$ is denoted by $M^{\smvee}:=\Homs_{\Ss{X}}(M,\Ss{X})$. An $\Ss{X}$-module $M$ is called \emph{reflexive} if the natural homomorphism from $M$ to $M^{\smvee \smvee}$ is an isomorphism. 
\begin{remark}
\label{rmk:dual.reflexive}
    Let $M$ be a finitely generated $\Ss{X}$-module. By \cite[Corollary 1.2]{Har3}, the module $M^{\smvee}$ is a reflexive $\Ss{X}$-module.
\end{remark}

Let $(X,x)$ be the germ of an irreducible complex analytic normal surface singularity. Denote by
$(\Ss{X,x},\mathfrak{m},\kappa)$ the local ring at $x$. Since 
\begin{equation*}
\Ext^1_{\Ss{X,x}}\left(\mathfrak{m}, \omega_{X,x}\right) \cong \Ext^2_{\Ss{X,x}}\left(\kappa, \omega_{X,x}\right) \cong \kappa,    
\end{equation*}
there exists a unique (up to non-canonical isomorphism) non-split exact sequence
\begin{equation}\label{Suc.Fund}
    0 \to \omega_{X,x} \to A \to \mathfrak{m} \to 0
\end{equation}

\begin{definition}
The exact sequence \eqref{Suc.Fund} is called \emph{the fundamental sequence}. The $\Ss{X,x}$-module $A$ is called \emph{the Auslander module or the fundamental module}.
\end{definition}
If $(X,x)$ is a hypersurface singularity, by~\cite[Lemma~1.5]{Yos1} the fundamental module $A$ is isomorphic to the third syzygy of $\kappa$. We will use this description, thus we introduce some notation: We can assume that $\Ss{X,x}\cong R = \mathbb{C}\{x,y,z\}/\langle f \rangle$. Take $f_x, f_y$ and  $f_z$ such that $f=x \cdot f_x + y \cdot f_y +z \cdot f_z$. Then, the minimal free resolution of the residue field $\kappa$ is given as follows:
\begin{equation}
    \dots \to R^4  \xrightarrow{C} R^4 \xrightarrow{D} R^4 \xrightarrow{C} R^4 \xrightarrow{B} R^3 \xrightarrow{A} R \to \kappa \to 0,
\end{equation}
where
\begin{align*}
    &A=(x,y,z)^t, \quad &\quad &B= \begin{pmatrix} 0 & -z & y\\ z & 0 & -x \\ -y & x & 0 \\ f_x & f_y & f_x \end{pmatrix}, \\
&C= \begin{pmatrix} 0 & f_x & -f_y & x \\ -f_x  & 0 & f_x & y \\ f_y  & -f_x  & 0 & z \\  -x & -y  & -z & 0 \end{pmatrix}, \quad &\quad &D= \begin{pmatrix}  0& -z& y & -f_x \\ z & 0 & -x & -f_y \\ -y  & x & 0 & -f_z \\ f_x  & f_y & f_z & 0 \end{pmatrix}.
\end{align*}
\begin{remark}
    By~\cite[Lemma~1.1]{Yos1}, the fundamental module is a reflexive module of rank $2$. In the special case of a hypersurface singularity, the pair of matrices $(C,D)$ is a \emph{matrix factorization} (see \cite{Ei}) of $f$, i.e., $CD=DC=f \cdot Id_4$ where $Id_4$ is the $4 \times 4$ identity matrix.
\end{remark}
\begin{remark}
\label{rmk:hyp}
By \cite[Theorem~3.2]{Martsinkovsky_1}, the hypersurface singularity $(X,x)$ defined by $V(z^n - xy)$ satisfies $A \cong \Omega_X^{\smvee \smvee}$.    
\end{remark}

\subsection{Full sheaves and the minimal adapted resolution} Let $(X,x)$ be the germ of a normal surface singularity and
\begin{equation*}
 \pi \colon \Rs \to X   
\end{equation*}
be a resolution. Recall the following definition of full sheaves as in~\cite[Definition~1.1]{Ka}.
\begin{definition}
An $\Ss{\Rs}$-module $\Sf{M}$ is called \emph{full} if there is a reflexive $\Ss{X}$-module $M$ such that 
$\Sf{M} \cong \left(\pi^* M\right)^{\smvee \smvee}$. We call $\Sf{M}$ the full sheaf associated to $M$.
\end{definition}
\begin{definition}
    Let $\Sf{F}$ be a sheaf on $\Rs$. We say that $\Sf{F}$ is generically generated by global sections if it is generated by global sections except in a finite set of points.
\end{definition}
The following characterization of full sheaves will be very important in the following sections.
\begin{proposition}[{\cite[Proposition~1.2]{Ka}}]\label{fullcondiciones}
A locally free sheaf $\Sf{M}$ on $\Rs$ is full if and only if the following two conditions hold.
\begin{enumerate}
\item The sheaf $\Sf{M}$ is generically generated by global sections.
\item The natural map $H^1_E(\Rs,\Sf{M}) \to H^1(\Rs,\Sf{M})$ is injective.
\end{enumerate}
If $\Sf{M}$ is the full sheaf associated to $M$, then $\pi_* \Sf{M}=M$.
\end{proposition}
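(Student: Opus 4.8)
This is the classical characterization of full sheaves, and I would prove it in the germ setting, where $H^0(\Rs,-)$ denotes sections over a small (Stein) representative, the module $M$ is its own module of global sections, and $\pi_*$ is computed along $\pi$. Two dictionary facts underpin the whole argument, so I would record them first: on the smooth surface $\Rs$ a coherent sheaf is reflexive if and only if it is locally free, and any torsion-free coherent sheaf $\Sf N$ is locally free off a finite set, with $\Sf N^{\smvee\smvee}/\Sf N$ of finite length; on the normal surface $X$, a coherent sheaf $N$ is reflexive if and only if $N=j_*(N|_U)$ for the open immersion $j\colon U\hookrightarrow X$, since $\Ss{X,x}$ is Cohen--Macaulay (normal of dimension two) and reflexivity is equivalent to $\mathrm{depth}\ge 2$ at $x$. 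With these in hand, the first real step is to reinterpret condition (2) via the long exact sequence of local cohomology with supports in $E$,
\[
H^0(\Rs,\Sf M)\xrightarrow{\ r\ }H^0(\Rs\setminus E,\Sf M)\longrightarrow H^1_E(\Rs,\Sf M)\xrightarrow{\ \iota\ }H^1(\Rs,\Sf M),
\]
in which $\Rs\setminus E=\pi^{-1}(U)\cong U$. By exactness, $\iota$ is injective if and only if $r$ is surjective; identifying $H^0(\Rs,\Sf M)=\Gamma(X,\pi_*\Sf M)$ and $H^0(\Rs\setminus E,\Sf M)=\Gamma(U,(\pi_*\Sf M)|_U)=\Gamma(X,(\pi_*\Sf M)^{\smvee\smvee})$, the map $r$ is the canonical map $\pi_*\Sf M\to(\pi_*\Sf M)^{\smvee\smvee}$ on global sections, which is always injective. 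Hence \emph{condition (2) holds precisely when $M:=\pi_*\Sf M$ is reflexive}; the point is that $E$ has codimension one, so sections over $\Rs\setminus E$ need not extend across $E$, and (2) is exactly the demand that they do.

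For the forward direction I would assume $\Sf M=(\pi^* M)^{\smvee\smvee}$ with $M$ reflexive; it is locally free since $\Rs$ is smooth, and over $U$ it restricts to $M|_U$. A finite generating set of $M$ consists of global sections whose pullbacks lie in $\Gamma(\Rs,\pi^* M)$ and hence in $\Gamma(\Rs,\Sf M)$; their restrictions to $\Rs\setminus E$ generate $M$, so the injective restriction $r$ is also surjective and $\Gamma(\Rs,\Sf M)=M$. Combined with reflexivity of $M$ this yields $\pi_*\Sf M=M$, which is the final assertion of the proposition, and it makes (2) hold by the first step. Condition (1) then holds because $\pi^* M\to(\pi^* M)^{\smvee\smvee}=\Sf M$ is surjective away from the finite set where $\pi^* M$ fails to be locally free, so $\Sf M$ is generated by its global sections at the generic point of every exceptional curve.

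For the converse, assume $\Sf M$ is locally free and satisfies (1) and (2); by the first step $M:=\pi_*\Sf M$ is reflexive. The counit $\pi^*\pi_*\Sf M\to\Sf M$ is a map $\pi^* M\to\Sf M$, and since $\Sf M$ is reflexive it factors through the reflexive hull, producing a natural map $\phi\colon(\pi^* M)^{\smvee\smvee}\to\Sf M$ which is an isomorphism over $\Rs\setminus E$. Both sheaves are locally free of the same rank, so $\phi$ is injective with cokernel $Q$ supported on $E$. Here condition (1) enters decisively: it forces $\pi^* M\to\Sf M$, and hence $\phi$, to be surjective at the generic point of every component $E_i$, so $Q$ is supported on a finite set and therefore has finite length. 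A map of locally free sheaves of equal rank with finite-length cokernel has $\det\phi$ nonvanishing off a finite set; since the zero locus of a function on a smooth surface is either empty or a curve, $\det\phi$ is a unit and $\phi$ is an isomorphism. Thus $\Sf M\cong(\pi^* M)^{\smvee\smvee}$ is full.

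The main obstacle is the codimension-one geometry of $E$: an isomorphism over the dense open $\Rs\setminus E$ is automatic and carries no information across the exceptional divisor, so the entire weight of the converse rests on using condition (1) in its sharp form—generation by global sections at the generic point of each exceptional curve, equivalently off a finite set—to confine the degeneracy locus of $\phi$ to dimension zero. Pinning down this interpretation and combining it with the two dictionary facts (reflexive equals locally free on the smooth surface, and finite length of the double-dual cokernel of a torsion-free sheaf), which are what convert "finite-length cokernel" into "$\det\phi$ is a unit," is the step that must be executed with care; the remainder is formal bookkeeping with the local-cohomology sequence and the identity $\pi_*\Sf M=M$.
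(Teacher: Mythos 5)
The paper gives no proof of this proposition at all: it is quoted directly from Kahn \cite{Ka}, and your argument is a correct reconstruction of the standard proof from that source — the local-cohomology exact sequence identifying condition (2) with reflexivity of $\pi_*\Sf{M}$, followed by the determinant argument showing the comparison map $(\pi^*\pi_*\Sf{M})^{\smvee\smvee}\to\Sf{M}$ is an isomorphism once condition (1) confines its cokernel to a finite set. One small point to tighten in the forward direction: surjectivity of the restriction map $r$ should be argued by noting that \emph{every} element of $H^0(\Rs\setminus E,\Sf{M})\cong \Gamma(U,M|_U)\cong M$ (here reflexivity of $M$ gives depth two at $x$, so sections over $U$ extend) is the restriction of the image in $\Sf{M}$ of its own pullback $\pi^*m$, rather than via restricting a finite generating set of $M$, since $\Gamma(U,-)$ is not right exact on the punctured neighbourhood $U$.
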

\begin{definition}\label{def:especial}
\label{def:espmodule}
Let $M$ be a reflexive $\Ss{X}$-module of rank $r$ and $\Sf{M}$ be the full sheaf associated to $M$. The full sheaf $\Sf{M}$ is called \emph{special} if $\dimc{R^1 \pi_* \left(\Sf{M}^{\smvee}\right)} = rp_g$ where $p_g$ is the geometric genus.
We say that $M$ is \emph{a special module} if for any resolution, the full sheaf associated to $M$ is special.
\end{definition}
\begin{definition}
Let $\Sf{F}$ be a sheaf on $\Rs$. The \emph{base points} of $\Sf{F}$ are the points $p \in \Rs$ such that $s(p)=0$ for all $s\in H^0(\Rs,\Sf{F})$. A component $E_j$ is called a \emph{fixed component} of $\Sf{F}$ if any section $s\in H^0(\Rs,\Sf{F})$ vanishes along $E_j$.
\end{definition}
Let $M$ be a reflexive $\Ss{X}$-module. The minimal adapted resolution associated to $M$ was defined in \cite{BoRo} and later generalized in \cite{Romano1}
\begin{definition}
\label{def:minadap}
Let $M$ be a reflexive $\Ss{X}$-module. The minimal resolution $\pi \colon \Rs\to X$ for which the associated full sheaf $\left(\pi^* M\right)^{\smvee \smvee}$ is generated by global 
sections is called the {\em minimal adapted resolution} associated to $M$. 
\end{definition}
\begin{remark}
\label{remark:Son.iguales}
    Let $L$ be a reflexive $\Ss{X}$-module of rank $1$. The minimal adapted resolution of $L$ is the minimal resolution for which the associated full sheaf does not have base points. 
\end{remark}
\subsection{Derivations and Kähler differentials}
For any $\mathbb{C}$-algebra $A$, we denote by $(\Omega_{A/\mathbb{C}},d)$ \emph{the module of Kähler differentials} (\emph{$\mathbb{C}$-differentials} of $A$), and by \emph{$d$ the universal derivation}. We denote by  $\mathrm{Der}_\mathbb{C}(A,F)$ the $A$-module of  \emph{$\mathbb{C}$-derivations} into the $A$-module $F$.  The module \( \mathrm{Der}_{\CC}(\Ss{X,x}, \Ss{X,x}) \) will be abridged  as  \( \mathrm{Der}_{\CC}(\Ss{X,x}) \).
 By the universal property of the Kähler differentials, we get $\mathrm{Der}_\mathbb{C}(A,F)=\Hom_A(\Omega_{A/\mathbb{C}},F)$.

If we regard $X$ as a normal scheme over $\mathbb{C}$, we can define the sheaf of relative differentials $\Omega_{X/\mathbb{C}}$. The reader can find all the details in \cite[Chapter 2, Section 8]{Har2}.
 As in the affine case, the sheaf of relative differentials satisfies the following universal property: for each $\Ss{X}$-module $\Sf{F}$ there is an isomorphism via the universal differential:
\begin{equation*}
    \Homs_{\Ss{X}}(\Omega_{X/\mathbb{C}},\Sf{F}) \cong \Ders_\mathbb{C}(\Ss{X},\Sf{F}).
\end{equation*}
From now on, we write $\Omega_{X}$ instead of $\Omega_{X/\mathbb{C}}$. Let $p\in \mathbb{Z}$ be a non-negative integer. We denote by
\begin{equation*}
    \Omega^p_{X} = \begin{cases}
\bigwedge^p \Omega_{X} &\text{if $p \geq 1$},\\
\Ss{X} &\text{if $p=0$}.
\end{cases}
\end{equation*}
\begin{remark}
\label{rmk:Der.refl}
    By Remark \ref{rmk:dual.reflexive}, the sheaves $\Ders_\mathbb{C}(\Ss{X})$ and $\Omega_{X/\mathbb{C}}^{\smvee \smvee}$ are reflexive sheaves.
\end{remark}
Let $\pi \colon \Rs\to X$ be a resolution of $(X,x)$. Set
\begin{equation*}
    L:= H_2(\Rs,\ZZ), \quad L':=H_2(\Rs, \partial \Rs, \ZZ) \quad \text{and} \quad L_{\QQ}:=L \otimes \QQ.
\end{equation*}
Let $\ell'_1, \ell'_2  \in L_{\QQ}$ where $\ell'_j= \sum_v l_{jv}'E_v$ for $j\in \{1,2\}$. We consider the partial order in $L_{\QQ}$ given  by $\ell'_1 \geq \ell'_2$ if and only if $l_{1v}' \geq l_{2v}'$ for all $v\in V$. We extend the intersection form $(,)$ from $L$ to $L_{\QQ}$ and denote this extension by $(,)_\QQ$. The sheaf  $\Omega^2_{\Rs}$  is the sheaf of holomorphic $2$-forms. The divisor of zeros of any meromorphic section of $\Omega^2_{\Rs}$ is called \emph{the canonical divisor;}
it is  denoted by $K_{\Rs}$. The rational cycle $Z_K\in L'$ (supported on $E$)
that satisfies $(Z_K , E_v)_{\QQ} = -K_{\Rs} \cdot E_v$ for any $v \in V$ is called \emph{the canonical cycle} (it is independent of the choice of $K_{\Rs}$.)
 In the case of Gorenstein singularities, one proves that $Z_K\in L$; see~\cite{Ne}.
 \begin{definition}
Let $(X,x)$ be a normal surface singularity and consider its minimal good resolution. We say that $X$ is \emph{(numerically) log-canonical} if in this resolution $Z_K\leq E$.
\end{definition}
If $(X,x)$ is 
Gorenstein, there is a Gorenstein $2$-form $\Omega_{\Rs}$ which is meromorphic in $\Rs$, and has neither zeros nor poles in $\Rs\setminus E$; it is
called the {\em Gorenstein form}. Let $div(\Omega_{\Rs})=\sum q_iE_i$ be the divisor associated with the Gorenstein form. The canonical cycle is $Z_K=\sum_i -q_iE_i$.

\subsection{Blowing up at coherent sheaves}
In this article, we use the description of Villamayor~\cite{Villa} of the blow-up at a coherent sheaf. Let $R$ be a domain with quotient field $K$. Fractional ideals are, by definition, finitely generated $R$-submodules of $K$. Two fractional ideals $J_1$ and $J_2$ are isomorphic if and only if there exists some element $k$ in $K$ different from zero such that $J_1=kJ_2$. The norm of a module is defined in the class of all fractional ideals modulo isomorphism as follows:

\begin{definition}[{\cite[p.~123]{Villa}}]
Let $M$ be a finitely generated $R$-module of rank $r$. The \emph{norm} of $M$ is the class
\begin{equation*}
    \| M \|_R := \im\left( \bigwedge^r M \to \bigwedge^r M \otimes K \cong K
    \right)/\sim,
\end{equation*}
where $\sim$ denotes the isomorphism as fractional ideals.
\end{definition}
The blow-up of $R$ at the module $M$ is described as follows:

\begin{theorem}[{\cite[Theorem~3.3]{Villa}}]
\label{Th:BlowUpM}
Let $M$ be a finitely generated $R$-module of rank $r$. There exists a blow-up:
\begin{equation*}
    f\colon \mathrm{Bl}_M(R) \to \text{Spec}(R),
\end{equation*}
with the following properties:
\begin{enumerate}
    \item The sheaf $f^*M / \mathrm{tor}$ is a locally free sheaf of $\Ss{\mathrm{Bl}_M(R)}$-modules of rank $r$.
    \item (Universal property) For any morphism $\sigma \colon Z \to \text{Spec}(R)$ such that $\sigma^* M / \mathrm{tor}$ is a locally free sheaf of $\Ss{Z}$-modules of rank $r$, there exists a unique morphism $\beta\colon Z \to \mathrm{Bl}_M(R)$ such that $f \circ{} \beta = \sigma$.
\end{enumerate}
\end{theorem}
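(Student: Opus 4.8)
The plan is to construct $\mathrm{Bl}_M(R)$ as the blow-up of $\mathrm{Spec}(R)$ along a representative of the norm $\|M\|_R$, and then to identify this blow-up with the closure of the graph of a rational map to a Grassmannian, which is what makes the two asserted properties transparent. Concretely, first I would fix generators $m_1,\dots,m_n$ of $M$; after tensoring with $K$ these induce a surjection $K^{n}\twoheadrightarrow M\otimes K\cong K^{r}$, and the associated $r\times r$ minors (the images of the $m_{i_1}\wedge\cdots\wedge m_{i_r}$ under $\bigwedge^{r}M\to K$) generate the fractional ideal $\|M\|_R$. After clearing denominators one obtains an honest ideal $I\subseteq R$ with $\|M\|_R=k\cdot I$ for some $k\in K^{*}$; since $\mathrm{Bl}_{I}(\mathrm{Spec}\,R)\cong\mathrm{Bl}_{kI}(\mathrm{Spec}\,R)$, the definition $\mathrm{Bl}_M(R):=\mathrm{Bl}_{I}(\mathrm{Spec}\,R)=\mathrm{Proj}\left(\bigoplus_{d\ge 0}I^{d}\right)$ does not depend on the representative, and a separate check shows it is independent of the chosen generators as well.

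For property (1), the key observation is that the minors above are exactly the Plücker coordinates of the rank-$r$ quotient $K^{n}\twoheadrightarrow K^{r}$. Hence they define a rational map from $\mathrm{Spec}(R)$ to the Grassmannian $\mathrm{Gr}(r,n)$, regular precisely on the locus where $M$ is locally free of rank $r$, and $\mathrm{Bl}_M(R)$ is canonically the closure of its graph: blowing up the ideal of minors resolves the indeterminacy of this map. I would then pull back the tautological rank-$r$ quotient bundle $Q$ on $\mathrm{Gr}(r,n)$ along $f$ and compare it with $f^{*}M$. The composite $f^{*}R^{n}\to f^{*}M$ and the tautological surjection $\Ss{\mathrm{Bl}_M(R)}^{\,n}\twoheadrightarrow Q$ agree over the dense open where $f$ is an isomorphism, so there is a natural surjection $f^{*}M\twoheadrightarrow Q$ killing exactly the torsion; since $Q$ is locally free and the induced map $f^{*}M/\mathrm{tor}\to Q$ is a generically bijective surjection of torsion-free sheaves of the same rank over an integral scheme, it is an isomorphism. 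Thus $f^{*}M/\mathrm{tor}\cong Q$ is locally free of rank $r$.

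For the universal property (2), let $\sigma\colon Z\to\mathrm{Spec}(R)$ be such that $\sigma^{*}M/\mathrm{tor}$ is locally free of rank $r$. The crux is the implication that $\sigma^{-1}(I)\Ss{Z}$ is then invertible: its local generators are the pullbacks of the $r\times r$ minors, and one checks that these generate $\|\sigma^{*}M/\mathrm{tor}\|$, which is locally principal precisely because $\bigwedge^{r}$ of a locally free sheaf of rank $r$ is a line bundle. Granting this, the classical universal property of the blow-up along an ideal (it is terminal among morphisms pulling $I$ back to an invertible sheaf) produces a unique $\beta\colon Z\to\mathrm{Bl}_M(R)$ with $f\circ\beta=\sigma$, which is exactly the assertion.

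The step I expect to be the main obstacle is the bookkeeping that links the module-theoretic condition to the ideal-theoretic one, which appears in two guises: proving that $f^{*}M/\mathrm{tor}$ is genuinely the tautological bundle $Q$ (not merely generically, so that the torsion is killed exactly and nothing more), and proving that ``$\sigma^{*}M/\mathrm{tor}$ locally free of rank $r$'' forces $\sigma^{-1}(I)\Ss{Z}$ to be invertible. Both rest on the statement that the formation of the norm commutes with pullback modulo torsion, and this is where one must argue carefully, since $\|\cdot\|_R$ is the image of the top exterior power in $K$ and pullback commutes neither with $\bigwedge^{r}$ nor with passage to the torsion-free quotient in general. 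Once this compatibility is in place, the rest follows formally from the universal properties of Grassmannians and of ideal blow-ups.
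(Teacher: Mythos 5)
The paper contains no proof of this statement: it is imported verbatim from Villamayor \cite[Theorem~3.3]{Villa}, so your proposal can only be judged against that source and against correctness. Your construction is the standard one and matches the literature: $\mathrm{Bl}_M(R)$ is the blow-up of a representative of the norm class $\|M\|_R$, identified with the closure of the graph of the rational map to $\mathrm{Gr}(r,n)$ given by the minors (this graph description goes back to Rossi \cite{Ros}). Your argument for property (1) is essentially complete: the blow-up of the minor ideal equals the graph closure, the pulled-back relations of $M$ die in the tautological quotient $Q$ because $Q$ is torsion-free on an integral scheme, and a generically bijective surjection $f^*M/\mathrm{tor}\twoheadrightarrow Q$ between torsion-free sheaves of the same rank is an isomorphism.

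The genuine gap is in property (2), exactly at the step you flagged, and it is not merely delicate but false as you stated it: for a fixed representative ideal $I$ of the norm class, ``$\sigma^*M/\mathrm{tor}$ locally free of rank $r$'' does \emph{not} imply that $\sigma^{-1}(I)\Ss{Z}$ is invertible. The reason is that the norm is only a class of fractional ideals, and pulling back a fractional-ideal \emph{class} along a non-dominant morphism is not well defined: representatives differing by a factor $k\in K^*$ can pull back to non-isomorphic ideal sheaves. Concretely, take $R=k[x,y]$ and $M=(x,y)\oplus xR$, of rank $2$; the minors of the natural generators $(x,0),(y,0),(0,x)$ give $I=(x^2,xy)=x\cdot(x,y)$, and $\mathrm{Bl}_I(\mathrm{Spec}\,R)\cong \mathrm{Bl}_{(x,y)}(\mathrm{Spec}\,R)$ is the blow-up of the origin. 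For $\sigma$ the inclusion of the line $Z=\{x=0\}$ one computes $\sigma^*M/\mathrm{tor}\cong \Ss{Z}\oplus\Ss{Z}$, free of rank $2$, while $I\cdot\Ss{Z}=0$ is not invertible; so your route through the universal property of the ideal blow-up produces no morphism $\beta$, even though $\beta$ exists (lift $Z$ to its strict transform, or note $(x,y)\cdot\Ss{Z}=(y)$ is invertible for the better representative). Your argument is correct when $\sigma$ is dominant, which is the only case this paper ever needs, but the statement quantifies over all $\sigma$. The fix lies inside your own setup for (1): the locally free quotient $\Ss{Z}^{\,n}\twoheadrightarrow \sigma^*M/\mathrm{tor}$ is a classifying map $\psi\colon Z\to \mathrm{Gr}(r,n)$; local freeness of rank $r$ forces the generic point of $Z$ to land in the free locus $U$ of $M$, the pair $(\sigma,\psi)$ sends the dense open set $\sigma^{-1}(U)$ into the graph of the rational map, hence sends $Z$ into its closure $\mathrm{Bl}_M(R)$; and uniqueness follows from separatedness, since $f$ is an isomorphism over $U$ and any two lifts agree on $\sigma^{-1}(U)$.
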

Villamayor constructed the above blow-up under more general assumptions, but in our setting Theorem~\ref{Th:BlowUpM} is sufficient. Furthermore, in our situation $R$ is a domain, hence this blow-up is a proper birational morphism (see \cite{Villa} and \cite{Ros} for details). 
The following results will be used later.
\begin{proposition}[{\cite[Proposition~3.3]{Romano1}}]
\label{Prop:2daconstruccion}
Let $(X,x)$ be the germ of a normal surface singularity. Let $M$ be a reflexive $\Ss{X}$-module and 
\begin{equation*}
    \pi\colon \Rs \to X,
\end{equation*}
be the associated minimal adapted resolution. Let $f \colon Bl_M(X) \to X$ be the blow-up of $X$ at the module $M$ and 
\begin{equation*}
    \rho\colon \widetilde{\mathrm{Bl}_M(X)}_{\text{min}} \to \mathrm{Bl}_M(X),
\end{equation*}
be the minimal resolution of $\mathrm{Bl}_M(X)$. Then, $\widetilde{\mathrm{Bl}_M(X)}_{\text{min}} \cong \Rs$.
\end{proposition}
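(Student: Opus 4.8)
The plan is to realize both smooth surfaces $\Rs$ and $W:=\widetilde{\mathrm{Bl}_M(X)}_{\text{min}}$ as resolutions of the germ $X$—via $\pi$ and $g:=f\circ\rho$ respectively, both isomorphisms over the regular part $U=X\setminus\{x\}$—and to show that each dominates the other over $X$. Since a pair of mutually dominating resolutions of a normal surface germ must be isomorphic, this yields $W\cong\Rs$. I would obtain the two domination relations by separate arguments: the universal property of Theorem~\ref{Th:BlowUpM} furnishes $\Rs\to W$, while the minimality built into Definition~\ref{def:minadap} furnishes $W\to\Rs$.

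For the direction $\Rs\to W$, the first task is to check that on the minimal adapted resolution the sheaf $\pi^{*}M/\mathrm{tor}$ is \emph{already} locally free of rank $r$. Write $\Sf{M}:=\left(\pi^{*}M\right)^{\smvee\smvee}$ for the full sheaf. By hypothesis $\Sf{M}$ is generated by global sections, and by Proposition~\ref{fullcondiciones} we have $\pi_{*}\Sf{M}=M$, so $H^{0}(\Rs,\Sf{M})=M$. As $M$ is a finitely generated module over the local ring $\Ss{X,x}$, it is generated by global sections, whence their pullbacks generate $\pi^{*}M$; since these same sections already generate the globally generated sheaf $\Sf{M}$, the canonical map $\pi^{*}M\to\Sf{M}$ is surjective. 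Factoring it as $\pi^{*}M\twoheadrightarrow\pi^{*}M/\mathrm{tor}\hookrightarrow\Sf{M}$ forces $\pi^{*}M/\mathrm{tor}\cong\Sf{M}$, which is locally free because $\Rs$ is smooth. Theorem~\ref{Th:BlowUpM}(2) then produces a unique morphism $\beta\colon\Rs\to\mathrm{Bl}_M(X)$ with $f\circ\beta=\pi$. As $\pi$ and $f$ are proper and agree over $U$, the morphism $\beta$ is proper birational; since $\Rs$ is smooth, $\beta$ exhibits $\Rs$ as a resolution of $\mathrm{Bl}_M(X)$, and therefore $\Rs$ dominates its minimal resolution $W$.

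For the direction $W\to\Rs$, I would verify that $W$ is itself an adapted resolution of $X$ for $M$. By Theorem~\ref{Th:BlowUpM}(1) the sheaf $f^{*}M/\mathrm{tor}$ is locally free of rank $r$ on $\mathrm{Bl}_M(X)$, so $\rho^{*}\!\left(f^{*}M/\mathrm{tor}\right)$ is locally free on $W$; it agrees in codimension one with $g^{*}M/\mathrm{tor}$ and, being reflexive, coincides with the full sheaf $\left(g^{*}M\right)^{\smvee\smvee}$. Because $M$ is generated by global sections, so is $f^{*}M/\mathrm{tor}$, and hence so is its pullback to $W$; thus the full sheaf on $W$ is generated by global sections, i.e. $W$ is adapted. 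By the minimality of $\Rs$ among adapted resolutions (Definition~\ref{def:minadap}), $W$ dominates $\Rs$.

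Combining the two directions gives proper birational morphisms $\Rs\to W$ and $W\to\Rs$ over $X$, whose composites are proper birational endomorphisms of a smooth surface germ and hence the identity; therefore $\Rs\cong W$. The step I expect to demand the most care is, in the second direction, the identification of $\rho^{*}(f^{*}M/\mathrm{tor})$ with the full sheaf $\left(g^{*}M\right)^{\smvee\smvee}$ together with its global generation—that is, checking that the module-theoretic blow-up of Theorem~\ref{Th:BlowUpM}, after minimal resolution, is compatible with the full-sheaf picture of Proposition~\ref{fullcondiciones}. Reconciling the two a priori different minimality notions (minimal adapted resolution of $X$ versus minimal resolution of $\mathrm{Bl}_M(X)$) is precisely what makes the mutual-domination argument close up.
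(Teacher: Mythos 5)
This proposition is not proved in the paper at all: it is imported verbatim from \cite[Proposition~3.3]{Romano1}, so there is no internal proof to compare against. Judged on its own terms, your argument is correct, and it is the natural double-domination proof underlying the cited result. Both of your key reductions are sound: (i) global generation of the full sheaf $\Sf{M}=(\pi^*M)^{\smvee\smvee}$ forces the canonical map $\pi^*M\to\Sf{M}$ to be surjective with kernel exactly the torsion (the target is torsion-free and the map is a generic isomorphism), so $\pi^*M/\mathrm{tor}\cong\Sf{M}$ is locally free and Theorem~\ref{Th:BlowUpM}(2) yields $\beta\colon\Rs\to\mathrm{Bl}_M(X)$; (ii) conversely, $\rho^*\left(f^*M/\mathrm{tor}\right)$ is locally free and is identified with $g^*M/\mathrm{tor}$ by the same generic-isomorphism argument, hence equals $\left(g^*M\right)^{\smvee\smvee}$, and it inherits global generation from $M$. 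Together these establish the equivalence ``full sheaf globally generated $\Longleftrightarrow$ pullback mod torsion locally free'', which is precisely the bridge between Villamayor's universal property and the adapted-resolution picture of Proposition~\ref{fullcondiciones}.

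Two points deserve flagging, though neither is a fatal gap. First, your second domination reads Definition~\ref{def:minadap} as a universal property: every resolution on which the full sheaf is globally generated dominates the minimal adapted one, i.e.\ the adapted resolutions have a \emph{least} element, not merely a minimal one. This is what makes the definition well posed and it is established in \cite{BoRo} and \cite{Romano1}, but it is an external input rather than a formal consequence of the definition's wording, and your proof silently relies on it. Second, $\mathrm{Bl}_M(X)$ need not be normal (the paper repeatedly passes to its normalization), so the factorization of $\beta$ through ``the minimal resolution of $\mathrm{Bl}_M(X)$'' should be routed through the normalization: $\Rs$ is smooth, hence $\beta$ factors through the normalization, which then dominates $W$. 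Relatedly, your closing inference ``proper birational endomorphism, hence the identity'' is not valid as stated (automorphisms exist); what saves it is that both composites are morphisms over $X$, hence restrict to the identity on the dense open set $\pi^{-1}(U)$ where $\pi$ is injective, and therefore equal the identity by separatedness. With these readings supplied, the proof closes correctly.
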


\begin{theorem}[{\cite[Theorem 4.5]{Romano1}}]
\label{Theo:IntroBlowUpMMinAdapt}
Let $(X,x)$ be the germ of a normal surface singularity. Let $M$ be a reflexive  $\Ss{X}$-module of rank $r$. Let $\pi\colon \Rs \to X$ be the minimal adapted resolution associated to $M$ with exceptional divisor $E$. Let $E_1,\dots,E_n$ be the irreducible components of $E$. Let $\Sf{M}:=(\pi^*M)^{\smvee \smvee}$ be the full sheaf associated to $M$, and denote by $c_1\left(\Sf{M} \right)$ its first Chern class. Then, the normalization of the blow-up of $X$ at $M$ is obtained by contracting the irreducible components $E_i$ such that $c_1\left(\Sf{M} \right) \cdot E_i =0$.
\end{theorem}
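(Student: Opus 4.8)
The plan is to use the universal property of Villamor's blow-up to factor the minimal adapted resolution through $\mathrm{Bl}_M(X)$, and then to realize the contraction $\Rs \to \overline{\mathrm{Bl}_M(X)}$ as the morphism attached to the determinant line bundle $\det \Sf{M} = \bigwedge^r \Sf{M}$. Since $\pi$ is the minimal adapted resolution, the full sheaf $\Sf{M} = (\pi^* M)^{\smvee \smvee}$ is locally free of rank $r$ and generated by its global sections (Definition~\ref{def:minadap}); in particular $\pi^* M /\mathrm{tor}$ is locally free, so Theorem~\ref{Th:BlowUpM}(2) applied to $\sigma = \pi$ produces a unique morphism $\beta \colon \Rs \to \mathrm{Bl}_M(X)$ with $f \circ \beta = \pi$.

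First I would identify the relevant sheaves. Pulling back the canonical surjection $f^* M \twoheadrightarrow f^* M/\mathrm{tor}$ along $\beta$ gives a surjection $\pi^* M = \beta^* f^* M \twoheadrightarrow \beta^*(f^* M/\mathrm{tor})$ onto a locally free sheaf of rank $r$; since the target is torsion-free this factors through $\pi^* M/\mathrm{tor}$, and a surjection between torsion-free sheaves of the same rank $r$ on the integral surface $\Rs$ has torsion kernel, hence is an isomorphism. Thus $\pi^* M/\mathrm{tor}$ is locally free and equals its reflexive hull $\Sf{M}$, so $\Sf{M} \cong \beta^*(f^* M/\mathrm{tor})$. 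Taking $r$-th exterior powers yields $\beta^* \det(f^* M/\mathrm{tor}) \cong \det \Sf{M}$; in particular $c_1(\Sf{M})$ is the $\beta$-pullback of the first Chern class of the determinant of the tautological quotient on $\mathrm{Bl}_M(X)$.

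Next I would bring in the normalization. By Proposition~\ref{Prop:2daconstruccion} the minimal resolution of $\mathrm{Bl}_M(X)$, and hence of its normalization $\overline{\mathrm{Bl}_M(X)}$, is exactly $\Rs$; therefore $\beta$ factors through the normalization, giving the minimal resolution morphism $\Rs \to \overline{\mathrm{Bl}_M(X)}$, which contracts a subconfiguration of $E$. The determinant $\det(f^* M/\mathrm{tor})$ is the invertible sheaf cut out by the norm $\|M\|_{\Ss{X}}$ becoming locally principal on the blow-up, hence it is relatively ample over $X$, and its pullback to $\overline{\mathrm{Bl}_M(X)}$ stays relatively ample. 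A relatively ample line bundle defines a contraction that collapses precisely the curves of degree zero; since its pullback to $\Rs$ is $\det \Sf{M}$, whose degree on $E_i$ equals $c_1(\Sf{M}) \cdot E_i$, the morphism $\Rs \to \overline{\mathrm{Bl}_M(X)}$ collapses exactly the components with $c_1(\Sf{M}) \cdot E_i = 0$. As a consistency check, global generation of $\Sf{M}$, hence of $\det \Sf{M}$, forces $c_1(\Sf{M}) \cdot E_i \geq 0$ for every $i$, so these zero-intersection curves are genuinely the fibres of the contraction.

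The step I expect to be the main obstacle is the identification in the third paragraph of the natural polarization on $\mathrm{Bl}_M(X)$ with the determinant (Plücker) line bundle cut out by $\|M\|_{\Ss{X}}$: this requires unwinding Villamor's Rees-algebra construction to see that making $M$ locally free of rank $r$ is polarized through $\bigwedge^r$, so that relative ampleness—and therefore the contraction criterion—is governed by $c_1(\Sf{M})$ rather than by the higher-rank sheaf $\Sf{M}$ itself, together with the routine checks that ampleness survives normalization and that the negative-definiteness of the intersection matrix of $E$ makes the degree-zero condition equivalent to being contracted.
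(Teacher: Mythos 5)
This statement is not proved in the paper at all: it is imported verbatim from \cite[Theorem 4.5]{Romano1}, so there is no internal proof to compare against, and your argument can only be judged on its own merits; on those merits it is sound and is essentially the strategy of the cited reference (universal property of Villamayor's blow-up, identification of the determinant of the flattened sheaf, contraction of the degree-zero curves). Two points deserve attention. First, your opening inference ``in particular $\pi^*M/\mathrm{tor}$ is locally free'' is not a formal consequence of $\Sf{M}=(\pi^*M)^{\smvee\smvee}$ being locally free: the natural map $\pi^*M/\mathrm{tor}\hookrightarrow\Sf{M}$ is injective with cokernel supported at finitely many points, and one must show it is surjective. This is exactly where adaptedness of $\pi$ enters: by Proposition \ref{fullcondiciones} one has $\pi_*\Sf{M}=M$, so the evaluation map $H^0(\Rs,\Sf{M})\otimes\Ss{\Rs}\to\Sf{M}$, surjective by global generation, factors through $\pi^*M\to\pi^*M/\mathrm{tor}\hookrightarrow\Sf{M}$, forcing the inclusion to be an isomorphism; without this step the appeal to Theorem \ref{Th:BlowUpM}(2) with $\sigma=\pi$ is unjustified. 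Second, the step you flag as the main obstacle is indeed the crux, and it resolves exactly as you suspect: in Villamayor's construction $\mathrm{Bl}_M(X)$ is the blow-up of the fractional ideal $\|M\|_{\Ss{X}}$, and $\det(f^*M/\mathrm{tor})$, being an invertible quotient of $f^*\bigwedge^r M$ that injects into the constant sheaf $K$, coincides with $\|M\|_{\Ss{X}}\cdot\Ss{\mathrm{Bl}_M(X)}$, the tautological relatively very ample sheaf of that blow-up; ampleness survives the finite normalization map, and your degree computation on $\Rs$ then correctly singles out the contracted components. A small simplification: Proposition \ref{Prop:2daconstruccion} is not needed, since $\beta$ factors through the normalization merely because $\Rs$ is normal and $\beta$ is dominant, and the degree-zero criterion identifies the contraction regardless of minimality.
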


\section{Nobile's criterion}\label{Sec:Nobile}

In the following sections, we study the blow-up of the module of derivations in dimensions one and two, under suitable hypotheses. Our guiding example is Nobile’s theorem for the Nash blow-up, which states that the Nash blow-up is an isomorphism only in the nonsingular case (see \cite[Theorem 2]{Nob}). Thus, when the variety is singular, the Nash blow-up necessarily produces a non-trivial modification. The proof of Nobile’s theorem relies on a regularity criterion formulated in terms of the freeness of the module of Kähler differentials (see \cite[Chapter 9]{Matsumura}).

In the same spirit, we investigate whether an analogous statement can be obtained for the module of derivations. Since a regularity criterion based on derivations is closely related to the Zariski–Lipman conjecture, this conjecture provides the natural framework for our approach. Assuming it, we prove the following analogue of Nobile’s theorem for the blow-up associated with the module of derivations:

\begin{theorem}\label{Th:Nobile}
    Let $k$ be a perfect field and $X$ be an integral algebraic variety over $k$. Let
    \begin{equation*}
        f \colon \mathrm{Bl}_{\Der}(X) \to X,
    \end{equation*}
    be the blow-up of $X$ at the module $\Ders(\Ss{X})$. Assume that the Zariski-Lipman conjecture holds. If the blow-up morphism $f$ is an isomorphism, then $X$ is nonsingular.
\end{theorem}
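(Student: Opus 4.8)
The plan is to convert the hypothesis ``$f$ is an isomorphism'' into the statement that $\Ders(\Ss{X})$ is locally free, and then to feed this into the Zariski--Lipman conjecture. The whole argument is a transport of property~(1) of Theorem~\ref{Th:BlowUpM} back along the isomorphism $f$.

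First I would record that $\Ders(\Ss{X}) \cong \Omega_{X}^{\smvee}$ is the dual of a coherent sheaf, hence torsion-free (and reflexive when $X$ is normal, cf.\ Remark~\ref{rmk:Der.refl}); here $\Ss{X}$ is a sheaf of domains since $X$ is integral, which is what makes a sheaf-$\Homs$ into $\Ss{X}$ torsion-free. Consequently its torsion subsheaf vanishes and $\Ders(\Ss{X})/\mathrm{tor} \cong \Ders(\Ss{X})$, so the passage to the quotient by torsion in Theorem~\ref{Th:BlowUpM} loses nothing.

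Next, write $r = \dim X$ for the generic rank of $\Ders(\Ss{X})$, which equals the rank of the tangent sheaf on the smooth locus. By Theorem~\ref{Th:BlowUpM}(1) the sheaf $f^{*}\Ders(\Ss{X})/\mathrm{tor}$ is locally free of rank $r$ on $\mathrm{Bl}_{\Der}(X)$. Since $f$ is an isomorphism, pullback along $f^{-1}$ is exact and sends locally free sheaves to locally free sheaves; applying it to $f^{*}\Ders(\Ss{X})/\mathrm{tor}$ and using the previous step, I conclude that $\Ders(\Ss{X})$ is itself locally free of rank $r$ on $X$. Local freeness is a stalkwise condition, so for every $x \in X$ the module $\Ders(\Ss{X})_{x}$ is a free $\Ss{X,x}$-module. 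This is precisely the hypothesis of the Zariski--Lipman conjecture, which we are assuming, and it gives smoothness at $x$; as $x$ is arbitrary, $X$ is nonsingular.

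I do not expect a serious obstacle inside the argument: the genuinely deep ingredient, the Zariski--Lipman conjecture, is assumed rather than proved, and everything else is formal. The only points requiring care are verifying that torsion-freeness really erases the ``$/\mathrm{tor}$'' in property~(1) and that the rank of the resulting locally free sheaf is exactly $\dim X$, so that the freeness hypothesis of Zariski--Lipman is met verbatim; both follow from the duality description $\Ders(\Ss{X}) \cong \Omega_{X}^{\smvee}$ together with the computation of the generic rank on the smooth locus.
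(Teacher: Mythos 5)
Your proposal is correct and follows essentially the same route as the paper's proof: both transport property~(1) of Theorem~\ref{Th:BlowUpM} back along the isomorphism $f$, observe that the torsion of $\Ders(\Ss{X})$ vanishes (you via torsion-freeness of a dual sheaf over a domain, the paper via reflexivity from Remark~\ref{rmk:Der.refl} plus the domain hypothesis --- the same fact in slightly different clothing), and then apply the Zariski--Lipman conjecture stalk by stalk. Your extra remarks about the generic rank being $\dim X$ are harmless but not needed, since the conjecture's hypothesis is simply freeness of the module of derivations.
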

\begin{proof}
    Let $f \colon \mathrm{Bl}_{\Der}(X) \to X$ be the blow-up of $X$ at the module $\Ders(\Ss{X})$. By Theorem~\ref{Th:BlowUpM}, we see that \[     f^*(\Ders_{\mathbb{C}}(\Ss{X}))/\mathrm{tor}(f^*(\Ders_{\mathbb{C}}(\Ss{X}))),     \] is a locally free sheaf of $\Ss{\mathrm{Bl}_{\Der}(X)}$-modules. Since $f$ is an isomorphism, we obtain an isomorphism,
\[     f^*(\Ders_{\mathbb{C}}(\Ss{X}))/\mathrm{tor}(f^*(\Ders_{\mathbb{C}}(\Ss{X}))) \cong \Ders_{\mathbb{C}}(\Ss{X})/\mathrm{tor}(\Ders_{\mathbb{C}}(\Ss{X})),     \] therefore,  $\Ders_{\mathbb{C}}(\Ss{X})/\mathrm{tor}(\Ders_{\mathbb{C}}(\Ss{X}))$ is a locally free $\Ss{X}$-module.

    Since $\Ss{X}$ is a domain and $\Ders_{\mathbb{C}}(\Ss{X})$ is a reflexive $\Ss{X}$-module, we have $\mathrm{tor}(\Ders_{\mathbb{C}}(\Ss{X})) = 0$. Hence, $\Ders_{\mathbb{C}}(\Ss{X})$ is a locally free sheaf of $\Ss{X}$-modules. Let $p \in X$, then
    \[
    \Ders_{\mathbb{C}}(\Ss{X})_p = \Der_{\mathbb{C}}(\Ss{X,p})
    \]
    is a free $\Ss{X,p}$-module. Since we assume the Zariski–Lipman conjecture holds, it follows that $\Ss{X,p}$ is regular. As this holds for every point $p \in X$, we conclude that $X$ is not singular.
\end{proof}

If we already have a criterion that tells us when the blow-up has resolved the singularities, then it is natural to ask the following question:

\begin{question}\label{Q1}
Let \(X\) be an algebraic variety over a perfect field \(k\) of characteristic $p\geq0$. Suppose that we iteratively blow-up of $X$ at the module \(\Ders(\Ss{X})\). Does this process end after finitely many steps?  
\end{question}

\begin{remark}\label{Lipman CE}

The preceding question has a negative answer in positive characteristic due to the failure of the Zariski-Lipman conjecture in this setting. Indeed, Lipman \cite{Lip} provided a counterexample by considering the normal surface $X \subset \mathbb{A}^3_k$ defined by $xy - z^p = 0$ over a perfect field $k$ of characteristic $p>0$. Since the $\mathcal{O}_X$-module $\Der_k(\mathcal{O}_X)$ is free of rank two, generated by $d_1 = x\partial_x - y\partial_y$ and $d_2 = \partial_z$, the blow-up at a free module is trivial and hence $Bl_{\Der}(X) \cong X$.\\
While the classical Nobile theorem also fails in positive characteristic --- for instance, the non-normal variety $X=V(x^3-y^2)\subset\mathbb{A}_k^2$ satisfies $\mathrm{Nash}(X)\cong X$ over $k$ a field of characteristic $2$ or $3$~\cite{Nob} --- Duarte and Núñez-Betancourt~\cite{DNB} showed that normality suffices to recover a Nobile-type theorem for the Nash blow-up in positive characteristic. However, since Lipman's counterexample already involves a normal variety, normality alone is insufficient to guarantee an analogous result for the blow-up of derivations. Consequently, from now on, we will restrict ourselves to varieties defined over the complex numbers and normal if necessary.

\end{remark}

In the case of one-dimensional varieties over fields of characteristic zero, the Zariski-Lipman conjecture holds, and the answer to the question \ref{Q1} is affirmative. This follows from Theorem \ref{Th:Nobile}.

\begin{corollary}\label{Curvas}
    Let $C$ be an algebraic curve over an algebraically closed field of characteristic zero. A finite sequence of blow-ups at the module $\Ders(\Ss{C})$ resolves the singularities of $C$.
\end{corollary}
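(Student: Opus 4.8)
The plan is to iterate the construction and use Theorem~\ref{Th:Nobile} to guarantee that each step is nontrivial, then bound the number of steps by a standard finiteness argument for curves. First I would reduce to the case where $C$ is integral: a curve is resolved by resolving each of its irreducible components, and the blow-up of Theorem~\ref{Th:BlowUpM} is defined for a domain, so we may assume $\Ss{C}$ is a domain with quotient field $K$. Over $K$ the module $\Ders(\Ss{C})$ has rank one (the generic point has transcendence degree one over $k$), so Theorem~\ref{Th:BlowUpM} applies and $f\colon \mathrm{Bl}_{\Der}(C)\to C$ is a proper birational morphism. Setting $Y_0 := C$ and $Y_{i+1} := \mathrm{Bl}_{\Der}(Y_i)$, each $Y_i$ is again an integral curve with the same function field $K$, so the process may be iterated and $\Ders(\Ss{Y_i})$ keeps generic rank one. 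Since $\dim C = 1$ and $k$ has characteristic zero, the Zariski--Lipman conjecture holds for every $Y_i$ (as recalled above, it is classical in dimension one); hence Theorem~\ref{Th:Nobile} tells us precisely that \emph{if $Y_i$ is singular, then $Y_{i+1}\to Y_i$ is not an isomorphism.}

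It remains to show that the tower $\cdots \to Y_2 \to Y_1 \to Y_0 = C$ stabilizes after finitely many steps. Each morphism $Y_{i+1}\to Y_i$ is proper and birational between curves, hence quasi-finite (a dominant morphism of one-dimensional integral schemes has finite fibers), and a proper quasi-finite morphism is finite; thus every structure morphism $\nu_i\colon Y_i\to C$ is finite and birational. Let $\nu\colon \bar{C}\to C$ be the normalization; since $C$ is a curve over a field, $\nu$ is finite, and $\bar{C}$ is regular, hence smooth, because it is a normal curve in characteristic zero. As all the $Y_i$ have function field $K$, $\bar{C}$ is also the normalization of each $Y_i$, and pushing the structure sheaves forward to $C$ we obtain a chain of coherent sub-$\Ss{C}$-algebras
\begin{equation*}
\Ss{C} \subseteq \nu_{1*}\Ss{Y_1} \subseteq \nu_{2*}\Ss{Y_2} \subseteq \cdots \subseteq \nu_*\Ss{\bar{C}}.
\end{equation*}
The $\Ss{C}$-module $\nu_*\Ss{\bar{C}}/\Ss{C}$ is supported on the finite singular locus and has finite length $\delta := \dim_k\!\left(\nu_*\Ss{\bar{C}}/\Ss{C}\right)$.

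The crux is to combine these two ingredients. A finite birational morphism $g\colon Y'\to Y$ of integral schemes with $g_*\Ss{Y'} = \Ss{Y}$ is an isomorphism; contrapositively, whenever $Y_{i+1}\to Y_i$ is not an isomorphism the corresponding inclusion $\nu_{i*}\Ss{Y_i} \subseteq \nu_{(i+1)*}\Ss{Y_{i+1}}$ is \emph{strict}, so the length of the quotient by $\Ss{C}$ increases by at least one. Since this length is bounded above by $\delta < \infty$, there can be no infinite strictly increasing chain, and after finitely many steps some $Y_N \to Y_{N-1}$ must be an isomorphism. By the nontriviality statement coming from Theorem~\ref{Th:Nobile}, this forces $Y_N$ to be nonsingular, so $Y_N \to C$ is the desired resolution. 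I expect the step requiring the most care to be the strict-enlargement claim—verifying that Nobile nontriviality (the non-isomorphism of $Y_{i+1}\to Y_i$) genuinely translates into a strict inclusion of structure sheaves inside $\nu_*\Ss{\bar{C}}$—together with the bookkeeping check that $\Ders(\Ss{Y_i})$ has generic rank one at every stage so that Theorem~\ref{Th:BlowUpM} continues to apply.
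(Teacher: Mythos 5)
Your proof is correct and follows essentially the same route as the paper's: the paper's (much terser) two-sentence proof likewise observes that $f \colon \mathrm{Bl}_{\Der}(C) \to C$ is finite and birational, so that the iterated blow-ups form an increasing chain of finite birational modifications bounded by the normalization, and then invokes Theorem~\ref{Th:Nobile} to conclude the process can only terminate at a nonsingular curve. Your write-up just makes explicit what the paper compresses — the properness-plus-quasi-finiteness argument, the chain of subalgebras inside $\nu_*\Ss{\bar{C}}$ with its $\delta$-length bound, and the strict-inclusion step — all of which are sound.
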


\begin{proof}
Since $f \colon \mathrm{Bl}_{\Der}(C) \to C$ is a finite birational morphism, by iterating the blow-up process a finite number of times we reach the normalization. Since the normalization is smooth, the process ends by Theorem \ref{Th:Nobile}.
\end{proof}

\section{Blow-up of rational surface singularities}\label{Sec:An}
\subsection{The $A_n$ case}
In this subsection, we will compute in an explicit way the blow-up of the module of the singular surface $A_n$ at the module of derivations. For this, we will use the results of Section~\ref{subsec:MCM}, in particular Remark \ref{rmk:hyp}, and the combinatorial properties of $A_n$ seen as a toric variety. First, we need to describe $A_n$ as a toric variety.

\begin{definition}\label{an-def} Given a semigroup $\Gamma\subset\ZZ^2$, we denote by $X_\Gamma$ the toric variety associated to the monomial algebra $\CC[\Gamma]$, i.e., $X_\Gamma=\text{Spec}(\CC[\Gamma])$. In particular, consider the semigroup
\begin{equation*}
\Gamma_{n}=\left \langle(1,0),(1,1),(n,n+1) \right\rangle_{\ZZ_{\geq0}}\subset \ZZ^{2}.    
\end{equation*}
We denote by $A_{n}$ the normal toric surface corresponding to $\Gamma_n$. The embedding of this variety in $\CC^3$ is given by the equation $A_n=V(xz-y^{n+1})$ (see \cite[Theorem 2.3.1]{NeBook}).
\end{definition}
Now, we need the combinatorial construction of the blow-up of a monomial ideal in a toric variety (for more details of this construction, see \cite[Section 2.6]{GT}).

\textbf{Combinatorial description of monomial blow-ups:} Let $X_{\Gamma}\subset\CC^n$ be a toric variety defined by a semigroup $\Gamma\subset\ZZ^d$. Let $I=\langle x^{m_1},\ldots,x^{m_k}\rangle\subset\CC[X_\Gamma]$ be a monomial ideal.
\begin{enumerate}
\item Abusing the notation, consider $I=\left \{m_1,\ldots,m_k \right\}\subset \Gamma$. Let $\mathcal{N}(I)$ be the Newton polyhedron of $I$, i.e. the convex hull in $\RR^d$ of the set $\bigcup_{1}^{k}\{m_i+\RR_{\geq0}\Gamma\}$.
\item Consider  $m_i$ for some $i\in\{1,\ldots,k\}$. Denote 
\begin{equation*}
 \Gamma_i=\Gamma+ \mathrm{Span}_{\ZZ_{\geq0}} \left  \{m_j-m_i|j\neq i \right\}.   
\end{equation*}
\item Given $m_i,m_j\in I$, the associated toric varieties $X_{\Gamma_{i}}$ and $X_{\Gamma_j}$ can be glued by the principal isomorphic open sets of each variety. The isomorphism between these open sets is induced by the isomorphism of local rings:
$$\CC[\Gamma_{m_i}]_{\frac{x^{m_j}}{x^{m_i}}}\cong\CC[\Gamma_{m_j}]_{\frac{x^{m_i}}{x^{m_j}}}.$$
\item The variety resulting from the above gluing is the blow-up of $X_\Gamma$ along $I$ (see \cite[Proposition 32]{GT}). We denote it by $\mathrm{Bl}_{I}X_\Gamma$.
\item Let $I_0=\{i\in I\,\,|\,\, m_i \text{ is a vertex of } \mathcal{N}(I)\}$. Then,
\begin{equation*}
    \mathrm{Bl}_{I}X_\Gamma=\sqcup_{i\in I_0}X_{\Gamma_i}/\sim,
\end{equation*}
where $\sim$ is the relation given in (3) above.
\end{enumerate}

\begin{theorem}\label{an}
    Let $A_n$ be as in Definition \ref{an-def}. Then 
    $$\mathrm{Bl}_{Der\,}A_n= \begin{cases}
        \CC^2\sqcup A_{n-2}\sqcup\CC^2/\sim       & \text{if } n \geq 3 \\
    \CC^2\sqcup\CC^2\sqcup\CC^2/\sim     & \text{if } n=2 \\
    \CC^2\sqcup\CC^2/\sim    & \text{if } n=1
    \end{cases}
    $$
    In particular, for $n=1$ or $n=2$, $\mathrm{Bl}_{Der\,}A_n$ is smooth.
\end{theorem}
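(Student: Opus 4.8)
The plan is to reduce the statement to the blow-up of a single \emph{monomial ideal} and then apply the combinatorial recipe for monomial blow-ups recalled above. The first and decisive step is to identify which fractional ideal controls $\mathrm{Bl}_{\Der}(A_n)$. Since $\Ders(\Ss{A_n})$ is reflexive of rank $2$, Theorem~\ref{Th:BlowUpM} says that blowing it up is exactly the operation that turns $f^*\Ders(\Ss{A_n})/\mathrm{tor}$ into a locally free rank-$2$ sheaf, i.e.\ that makes $\bigwedge^2\Ders(\Ss{A_n})$ invertible. By Villamayor's description (via the norm introduced before Theorem~\ref{Th:BlowUpM}) this is the same as blowing up $A_n$ at the norm $\|\Ders(\Ss{A_n})\|$, the image of $\bigwedge^2\Ders(\Ss{A_n})$ in the function field \cite{Villa}. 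Thus the whole computation rests on evaluating this norm explicitly and recognizing it as (isomorphic to) a monomial ideal.

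To compute it I would describe $\Ders(\Ss{A_n})$ concretely. Writing $f=xz-y^{n+1}$, one has $\Ders(\Ss{A_n})=\ker\!\big(\Ss{A_n}^{3}\xrightarrow{(f_x,f_y,f_z)}\Ss{A_n}\big)$, the module of vector fields tangent to $A_n$; equivalently, by Remark~\ref{rmk:hyp} it is the dual $A^{\smvee}$ of the fundamental module $A\cong\Omega_{A_n}^{\smvee\smvee}$. Using the torus action, $\Ders(\Ss{A_n})$ is generated by the two Euler fields $E_1=x\partial_x+y\partial_y+nz\partial_z$ and $E_2=y\partial_y+(n+1)z\partial_z$ (a frame on the open torus) together with the Koszul fields $\eta_{xy}=f_y\partial_x-f_x\partial_y$ and $\eta_{yz}=f_z\partial_y-f_y\partial_z$. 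Expressing $\eta_{xy},\eta_{yz}$ in the frame $(E_1,E_2)$ and forming the $2\times2$ minors, a direct calculation gives $\|\Ders(\Ss{A_n})\|=(1,\,x/y,\,z/y)=y^{-1}(x,y,z)$. Hence $\|\Ders(\Ss{A_n})\|$ is isomorphic, as a fractional ideal, to the maximal ideal $\mathfrak m=(x,y,z)$, and since the blow-up is unchanged under multiplication by a principal fractional ideal we conclude $\mathrm{Bl}_{\Der}(A_n)\cong\mathrm{Bl}_{\mathfrak m}(A_n)$. In other words, blowing up the module of derivations of $A_n$ is the same as blowing up the origin.

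With this reduction, the remaining work is the purely toric computation of $\mathrm{Bl}_{\mathfrak m}(A_n)$ via the recipe above. The ideal $\mathfrak m$ corresponds to the lattice points $\{(1,0),(1,1),(n,n+1)\}\subset\Gamma_n$, and I would determine the vertices of its Newton polyhedron: for $n=1$ the point $(1,1)$ is the midpoint of $[(1,0),(1,2)]$ and is not a vertex, giving two charts, whereas for $n\geq 2$ all three points are vertices, giving three charts. Computing each chart ring $\Ss{A_n}[\,x^{m_j}/x^{m_i}\,]$ directly: the chart at $x$ is $\CC[x,y/x]\cong\CC^2$, the chart at $z$ is $\CC[z,y/z]\cong\CC^2$, and the chart at $y$ is $\CC[x/y,z/y,y]$ subject to $(x/y)(z/y)=y^{n-1}$, i.e.\ $A_{n-2}$. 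Gluing these charts along their common torus yields exactly the three cases in the statement; since $A_{n-2}$ is $\CC^2$ when $n=2$ (it is $A_0$) and is absent when $n=1$, all charts are smooth in those two cases, which gives the final claim.

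The main obstacle is the norm computation, namely producing a \emph{complete} generating set of $\Ders(\Ss{A_n})$ so that the image of $\bigwedge^2$ is correctly identified as $y^{-1}\mathfrak m$ rather than a proper sub- or over-ideal. Since enlarging the generating set can only enlarge the norm, I would supplement the four explicit fields above with a local check that they generate the free rank-$2$ stalk at every point of $A_n\setminus\{x\}$, and then use reflexivity of $\Ders(\Ss{A_n})$ to control the behaviour in codimension one; this pins the norm down to $y^{-1}\mathfrak m$ exactly. It is worth contrasting this with the Nash blow-up, whose governing ideal is the Jacobian ideal $(x,z,y^n)$: passing to the dual module collapses this to $\mathfrak m$, and it is precisely this collapse that allows the derivation blow-up to resolve $A_1$ and $A_2$ in a single step.
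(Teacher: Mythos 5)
Your route is genuinely different from the paper's, and its backbone is sound. The paper never touches the vector fields themselves: it invokes Remark \ref{rmk:hyp} to replace the module of derivations by the fundamental module, writes down the Yoshino--Kawamoto matrix factorization $(C,D)$ of $f=xz-y^{n+1}$, and blows up the ideal of $2\times 2$ minors of columns $3$ and $4$ of $D$, namely $(y^{n+1},x^2,xy)=x\cdot(x,y,z)$; you instead compute Villamayor's norm of $\Ders(\Ss{A_n})$ directly from explicit tangent fields and get $y^{-1}(x,y,z)$. These two fractional ideals differ only by principal factors, so both reduce the theorem to the same statement, which your proposal makes explicit and the paper does not: $\mathrm{Bl}_{\Der}(A_n)$ is the blow-up of the maximal ideal, i.e.\ of the origin. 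Your toric chart computation then agrees with the paper's (charts $\CC[x,y/x]$, $\CC[x/y,z/y,y]$ with $(x/y)(z/y)=y^{n-1}$, and $\CC[z,y/z]$, the middle point failing to be a vertex exactly when $n=1$). Your generating set $E_1,E_2,\eta_{xy},\eta_{yz}$ is in fact complete: up to an invertible constant change of basis these are the canonical toric generators (the two torus fields plus one field per ray of the cone). One further point in favor of your route: the theorem concerns $\Omega_X^{\smvee}$ while Remark \ref{rmk:hyp} concerns $\Omega_X^{\smvee\smvee}$; the paper's identification of the two blow-ups rests on $A_n$ being Gorenstein, whereas you work with derivations from the start.

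The genuine gap is in the step you yourself flag as the main obstacle. You propose to certify completeness of the generating set by checking that the four fields generate the stalks on $A_n\setminus\{0\}$ and then ``using reflexivity to control codimension one''; this cannot pin the norm down exactly. If $N\subseteq\Ders(\Ss{A_n})$ is the submodule your fields generate, agreement off the origin gives that $\| N \|_R$ and $\| \Ders(\Ss{A_n}) \|_R$ coincide at every height-one prime, hence coincide up to divisorial (reflexive) hull; but norms are not reflexive in general, and two fractional ideals agreeing in codimension one can have different blow-ups. The relevant example is exactly the one at stake: $\mathfrak{m}=(x,y,z)$ and $\Ss{A_n}$ agree off the origin, yet one blow-up is trivial and the other is not. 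Concretely, your check only locates $\| \Ders(\Ss{A_n}) \|_R$ between $y^{-1}\mathfrak{m}$ and its hull $y^{-1}\Ss{A_n}$, and deciding between these two is precisely the content of the theorem. Two honest ways to close the gap: (i) use the quotient description $A_n\cong\CC^2/(\ZZ/(n+1))$, which gives $\Ders(\Ss{A_n})\cong(u,v^n)\partial_u\oplus(u^n,v)\partial_v$ on the nose, whence the norm is the product ideal $(u,v^n)(u^n,v)=(x,y,z)=\mathfrak{m}$ with no codimension-one argument needed (equivalently, use the torus weight-space decomposition of $\Ders$); or (ii) rule out the alternative by noting that if $\| \Ders(\Ss{A_n}) \|_R$ were principal, then $\mathrm{Bl}_{\Der}(A_n)\to A_n$ would be an isomorphism, contradicting Theorem \ref{Th:Nobile}, since the Zariski--Lipman conjecture is known for quotient singularities.
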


\begin{proof} Since $A_n$ is the zero locus of $f=xz-y^{n+1}$, let $f_x=0$, $f_y=-y^n$ and $f_z=x$. Notice that $f=x \cdot f_x + y \cdot f_y +z \cdot f_z$. Substituting in the matrix $D$ of Section~\ref{subsec:MCM}, we obtain:
$$D= \begin{pmatrix}  0& -z& y & 0 \\ z & 0 & -x & y^n \\ -y  & x & 0 & -x \\ 0  & -y^n & x & 0 \end{pmatrix}.$$
By Remark \ref{rmk:hyp} and \cite[Corollary 5.2]{Martsinkovsky_1}, the blow-up is given by the blow-up of the ideal $I$ that is defined by the $2\times2$ minors of any two columns of $D$ of maximal rank. For this, we will take columns 3 and 4 of the matrix $D$, obtaining
\begin{equation*}
 I=\langle y^{n+1}, x^2, xy\rangle=\langle (uv)^{n+1}, u^2, u^{2}v\rangle\subset \CC[u,uv,u^{n}v^{n+1}]=\CC[X_{\Gamma}].   
\end{equation*}
Since $I$ is a monomial ideal, we can apply the previous algorithm to compute the blow-up at the ideal $I$. In this case $I=\{(2,0),(2,1),(n+1,n+1)\}$. 
\begin{description}
    \item[Description of $\Gamma_1$, where $m_1=(2,0)$] By definition, 
    \begin{align*}
  \Gamma_1&=\Gamma+ \mathrm{Span}_{\ZZ_{\geq0}} \left \{(0,1),(n-1,n+1)\right \}=\mathrm{Span}_{\ZZ_{\geq0}} \left \{(1,0),(1,1),(n,n+1),(0,1),(n-1,n+1)\right \}\\
  &=\mathrm{Span}_{\ZZ_{\geq0}} \left \{(1,0),(0,1)\right \}.      
    \end{align*}
Thus, $X_{\Gamma_1}=\CC^2$. 
\item[Description of $\Gamma_2$, where $m_2=(2,1)$] By definition,
\begin{equation*}
    \Gamma_2=\Gamma+ \mathrm{Span}_{\ZZ_{\geq0}} \left \{(0,-1),(n-1,n)\right \}=\mathrm{Span}_{\ZZ_{\geq0}} \left \{(1,0),(1,1),(n,n+1),(0,-1),(n-1,n)\right \}.
\end{equation*}
 Since $(1,0)=(1,1)+(0,-1)$ and $(n,n+1)=(n-1,n)+(1,1)$, we get 
 \begin{equation*}
  \Gamma_2=\mathrm{Span}_{\ZZ_{\geq0}} \left \{(0,-1),(1,1),(n-1,n)\right \}.
 \end{equation*}
 Thus, $X_{\Gamma_2}=A_{n-2}$, for $n\geq3$. In particular, if $n=2$, we get
 \begin{equation*}
     \Gamma_2=\mathrm{Span}_{\ZZ_{\geq0}} \left \{(0,-1),(1,1),(1,2)\right \}=\mathrm{Span}_{\ZZ_{\geq0}} \left \{(0,-1),(1,2)\right \}.
 \end{equation*} Thus, $X_{\Gamma_2}=\CC^2$.
 \item[Description of $\Gamma_3$, where $m_3=(n+1,n+1)$] By definition,
 \begin{align*}
     \Gamma_3&=\Gamma+\mathrm{Span}_{\ZZ_{\geq0}} \left \{(-n+1,-n-1),(-n+1,-n)\right \}\\
     &=\mathrm{Span}_{\ZZ_{\geq0}} \left \{(1,0),(1,1),(n,n+1),(-n+1,-n-1),(-n+1,-n)\right \}.
 \end{align*}
Since the rays of $\Gamma_3$ are $(-n+1,-n),(n,n+1)$ and  $$\begin{vmatrix}
 -n+1 & n \\
 -n & n+1
\end{vmatrix}=1.$$  Thus, $X_{\Gamma_3}=\CC^2$.

\end{description}
In the special case when $n=1$, we have that $I=\{(2,0),(2,1),(2,2)\}$. Notice that $m_2=(2,1)$ is not a vertex of $\mathcal{N}(I)$. Thus, $\mathrm{Bl}_{Der\,}X_\Gamma$ only has two affine charts, both are isomorphic to $\CC^2$.
\end{proof}

\begin{corollary}
\label{Cor:An} Iterating the blow-ups at the sheaf of derivations produces a resolution of $(A_n,0)$. Moreover, it follows that $n/2$ blow-ups are required when $n$ is even, and $(n+1)/2$ when $n$ is odd. Furthermore, upon completion of this process, one obtains the minimal resolution. 
\end{corollary}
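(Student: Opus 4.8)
The plan is to induct on $n$ with Theorem~\ref{an} as the engine, tracking the exceptional divisor combinatorially to pin down minimality. I would first settle termination and the count. By Theorem~\ref{an}, for $n\geq 3$ the variety $\mathrm{Bl}_{\Der}(A_n)$ is covered by two smooth charts isomorphic to $\CC^2$ together with a single singular chart isomorphic to $A_{n-2}$, while for $n\in\{1,2\}$ it is already smooth. Hence a single blow-up strictly lowers the index by $2$, and the singular locus runs through $A_n, A_{n-2}, A_{n-4},\dots$, terminating at $A_2$ (for $n$ even) or $A_1$ (for $n$ odd); one further blow-up makes each of these smooth. Counting the steps gives $n/2$ blow-ups when $n$ is even and $(n+1)/2$ when $n$ is odd. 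A point to verify here is that blowing up the module $\Ders(\Ss{\cdot})$ of the ambient variety restricts, on the singular chart, to the blow-up of the module of derivations of $A_{n-2}$, so that the smooth charts contribute nothing further; this is where the locality of the constructions in Theorem~\ref{Th:BlowUpM} and Theorem~\ref{an} is used.

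The more delicate claim is that the smooth surface produced is the \emph{minimal} resolution. Since the entire process is toric, I would argue combinatorially, tracking the fan (equivalently the Newton-boundary data) through each monomial blow-up exactly as in the proof of Theorem~\ref{an}. Recall that the minimal resolution of $A_n$ is the toric resolution whose exceptional divisor is the chain $E_1,\dots,E_n$ with $E_i^2=-2$ and $E_i\cdot E_{i+1}=1$. The plan is to show that the first blow-up introduces precisely the two extremal rays, i.e.\ it creates the end curves $E_1$ and $E_n$ and exhibits the residual $A_{n-2}$ as the middle subchain $E_2,\dots,E_{n-1}$; by induction the $k$-th blow-up then creates the symmetric pair $E_k, E_{n+1-k}$ (these coincide in the single central curve at the last step when $n$ is odd). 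Since every newly created curve is a $(-2)$-curve and no curve is ever contracted, the total exceptional divisor of the composite morphism is exactly the $A_n$ chain, which contains no $(-1)$-curve, and this identifies the result with the minimal resolution.

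An alternative route to minimality is to invoke Proposition~\ref{Prop:2daconstruccion}: the minimal resolution of $\mathrm{Bl}_{\Der}(A_n)$ is the minimal adapted resolution of the reflexive module $\Ders(\Ss{A_n})$, and for the $A_n$ singularity this minimal adapted resolution agrees with the minimal resolution; fed into the induction, this again shows that no superfluous curve appears at any stage. The main obstacle is exactly this minimality statement: one must rule out the appearance of a $(-1)$-curve that would have to be contracted, which requires either careful bookkeeping of the toric exceptional divisor and its self-intersections across all steps, or a clean identification of the minimal adapted resolution of $\Ders(\Ss{A_n})$ with the minimal resolution of $A_n$. The termination and the blow-up count, by contrast, follow by a direct induction from Theorem~\ref{an}.
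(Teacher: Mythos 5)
Your proposal is correct and follows essentially the same route as the paper: the termination and the count of $n/2$ (resp.\ $(n+1)/2$) blow-ups come from iterating Theorem~\ref{an}, and minimality comes from the divisor bookkeeping that the paper itself only sketches in the remark following the corollary (each blow-up contributes two $(-2)$-curves, yielding exactly $n$ divisors and hence no $(-1)$-curves). Your alternative argument via Proposition~\ref{Prop:2daconstruccion}, identifying the minimal adapted resolution of $\Ders(\Ss{A_n})$ with the minimal resolution (which holds since $A_n$ is rational), is precisely the ``more general and precise'' argument the paper defers to in Theorem~\ref{th:Main.rational}, so both of your routes coincide with the paper's treatment.
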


\begin{remark}Let us note that in the proof of Theorem \ref{an}, each blow-up gives rise to two divisors, so after iterating the process, we obtain exactly n divisors, which results in the minimal resolution. This will be described below in a more general and precise manner (see Theorem \ref{th:Main.rational}). 
\end{remark}

\subsection{Rational surface singularities}
In this section, we denote by $(X,x)$ the germ of a rational normal surface singularity. Let $\pi\colon \Rs_{min} \to X$ be the minimal resolution of $X$. Since $(X,x)$ is a rational singularity, the minimal resolution coincides with the minimal good resolution.

\begin{theorem}
\label{th:Main.rational}
    Any rational surface singularity can be resolved after a finite number of blow-ups at the module of derivations. Moreover, such resolution is the minimal resolution of singularities.
\end{theorem}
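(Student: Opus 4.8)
The plan is to translate the blow-up into a contraction on the minimal resolution and then induct on the number of exceptional curves. Let $\pi\colon\Rs_{\min}\to X$ be the minimal resolution, with exceptional curves $E_1,\dots,E_n$—all smooth rational, since $X$ is rational—and put $\mathcal{D}:=(\pi^*\Ders_{\CC}(\Ss{X}))^{\smvee\smvee}$, the full sheaf of the module of derivations. By Theorem~\ref{Theo:IntroBlowUpMMinAdapt}, the normalization of $\mathrm{Bl}_{\Der}(X)$ is obtained from the minimal adapted resolution of $\Ders_{\CC}(\Ss{X})$ by contracting precisely the curves $E_j$ with $c_1(\mathcal{D})\cdot E_j=0$. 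So everything reduces to computing these intersection numbers and to identifying the singularities produced by the contraction. The first thing I would establish is that the minimal adapted resolution of $\Ders_{\CC}(\Ss{X})$ is $\Rs_{\min}$ itself—equivalently, that $\mathcal{D}$ is already globally generated on the minimal resolution—so that the curve count is taken on one fixed dual graph throughout the induction.

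For the intersection numbers I would follow the paper's guiding idea and use the McKay correspondence. For a quotient singularity $X=\CC^2/G$ the invariant-vector-field description gives $\Ders_{\CC}(\Ss{X})\cong M_Q$, the reflexive module of the defining two-dimensional representation $Q$, so $\mathcal{D}$ is the Gonzalez-Sprinberg–Verdier tautological bundle $\mathcal{R}_Q$. Additivity of $\mathcal{R}_{(-)}$ together with the defining relation $c_1(\mathcal{R}_{\rho_i})\cdot E_j=\delta_{ij}$ yields
\begin{equation*}
    c_1(\mathcal{D})\cdot E_j \;=\; \bigl(\text{multiplicity of }\rho_j\text{ in }Q\bigr)\;=\;a_{0j},
\end{equation*}
the number of edges from $E_j$ to the extending node in the affine McKay graph (using the special McKay correspondence of Wunram when $G\not\subset SL_2$). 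Hence the contracted curves are exactly those not adjacent to the extending node. I would then check that each connected component of the contracted sub-configuration is again the resolution graph of a quotient singularity—for rational double points this is automatic, since a connected negative-definite configuration of $(-2)$-curves is of $ADE$ type—so that the iteration stays inside the class being analysed.

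Termination follows from a single monotone invariant: the total number of exceptional curves in the minimal resolutions of the singular points. At every step at least one curve survives the contraction, because $Q\neq0$ forces some $\rho_j$ to occur with positive multiplicity, giving $c_1(\mathcal{D})\cdot E_j>0$; indeed, were every $c_1(\mathcal{D})\cdot E_j=0$, the normalization of the blow-up would be $X$ itself and $f$ an isomorphism, which Nobile's criterion (Theorem~\ref{Th:Nobile}) rules out for singular $X$. Thus the invariant strictly decreases and reaches $0$—a smooth surface—after at most $n$ blow-ups; specialising to $A_n$ recovers the $\lceil n/2\rceil$ count of the corollary to Theorem~\ref{an}.

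The main obstacle is extending this from quotient singularities to arbitrary rational ones, where no group and hence no McKay dictionary exists. There I would need an intrinsic computation of $c_1(\mathcal{D})\cdot E_j$ from the resolution data—most naturally through the degeneration of the differential $T_{\Rs_{\min}}\to\mathcal{D}$ along $E$—together with a proof that the first blow-up already replaces a general rational singularity by Hirzebruch–Jung (cyclic-quotient) singularities, after which the previous two paragraphs apply verbatim. Guaranteeing uniformly that $\mathcal{D}$ remains globally generated on the minimal resolution—so that the minimal adapted resolution never grows and the curve-counting invariant stays genuinely monotone—is the technical crux of the argument.
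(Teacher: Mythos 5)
Your overall skeleton does coincide with the paper's: identify the blow-up with a contraction of exceptional curves on the minimal resolution via Theorem~\ref{Theo:IntroBlowUpMMinAdapt}, rule out trivial steps using the Zariski--Lipman conjecture for rational singularities \cite{Graf} together with the Nobile-type criterion (Theorem~\ref{Th:Nobile}), and terminate because the number of curves of $\Rs$ that remain contracted strictly decreases. But the step you yourself flag as ``the technical crux'' --- that $\mathcal{D}=(\pi^*\Ders_{\CC}(\Ss{X}))^{\smvee\smvee}$ is generated by global sections on the minimal resolution, at every stage of the iteration --- is a genuine gap, and it is exactly the point the paper closes with Esnault's theorem \cite[Lemma and Definition~1.1]{Es}: over a \emph{rational} surface singularity, the full sheaf of \emph{every} reflexive module is already globally generated on the minimal resolution. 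This single fact holds uniformly for all rational singularities and all reflexive modules, so the entire McKay analysis becomes unnecessary; the paper's proof never computes which curves satisfy $c_1(\mathcal{D})\cdot E_j=0$, it only needs that the contracted locus shrinks strictly at each step.

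Two further steps of your plan would fail or remain unproven as written. First, your intersection formula is only justified for $G\subset SL_2$: when $G\not\subset SL_2$, Wunram's correspondence \cite{Wu} gives $c_1(\mathcal{R}_{\rho})\cdot E_j=\delta_{ij}$ only for \emph{special} representations, and the defining representation $Q$ (hence the module $M_Q\cong\Ders_{\CC}(\Ss{X})$) need not be special, so ``the contracted curves are those not adjacent to the extending node'' is unsubstantiated beyond the du Val case. Second, your induction needs each singular point of the blow-up to lie again in the class you can treat, and you explicitly leave open the claim that one blow-up turns a general rational singularity into Hirzebruch--Jung singularities. The paper sidesteps both issues: the blow-up is normal by \cite[Proposition~4.2]{Gus}, and a Leray spectral sequence argument through the commutative diagram relating $\Rs$, the blow-up and $X$ shows its singularities are again \emph{rational}; combined with Proposition~\ref{Prop:2daconstruccion}, the minimal resolution of every successive blow-up is still $\Rs$, so the argument (Esnault plus \cite{Graf} plus the contraction description) re-applies verbatim within the class of rational singularities. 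In short, your plan has the right shape, but its load-bearing steps --- uniform global generation, identification of the contracted curves, and stability of the class under iteration --- are precisely what is missing, and the first and third cannot be recovered by McKay-type arguments alone.
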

\begin{proof}
Since $(X,x)$ is a rational singularity and the Zariski-Lipman conjecture is true for rational surface singularities (see \cite{Graf}), the module $\mathrm{Der}_{\CC}(\Ss{X,x})$ is not free. Denote by
\begin{equation*}
    f_1\colon \mathrm{Bl}_{\mathrm{Der}}(X) \to X,
\end{equation*}
the blow-up of $\mathrm{Der}_{\CC}(\Ss{X,x})$ at $X$. Moreover, denote by 
\begin{equation*}
     \rho_1 \colon \widetilde{\mathrm{Bl}_{\mathrm{Der}}(X)}_{\text{min}} \to \mathrm{Bl}_{\mathrm{Der}}(X),
\end{equation*}
the minimal resolution of $\mathrm{Bl}_{\mathrm{Der}}(X)$.

Let $\pi\colon \Rs \to X$ be the minimal resolution of $X$.  We have the following commutative diagram:
\begin{equation}
\label{eq.Diagram1}
\begin{tikzpicture}
  \matrix (m)[matrix of math nodes,
    nodes in empty cells,text height=2ex, text depth=0.25ex,
    column sep=3.5em,row sep=3em] {
    \widetilde{\mathrm{Bl}_{\mathrm{Der}}(X)}_{\text{min}} & \Rs\\
    \mathrm{Bl}_{\mathrm{Der}}(X) & X\\
};
\draw[-stealth] (m-2-1) edge node[below]{$f_1$} (m-2-2);
\draw[-stealth] (m-1-1) edge node[left]{$\rho_1$} (m-2-1);

\draw[-stealth] (m-1-2) edge node[right]{$\pi$} (m-2-2);
\end{tikzpicture}
\end{equation}
Since $X$ has rational singularities and $\mathrm{Der}_{\CC}(\Ss{X,x})$ is a reflexive $\Ss{X}$-module (see Remark \ref{rmk:Der.refl}), by \cite[Lemma and Definition 1.1]{Es} the minimal resolution $\Rs$ is the minimal adapted resolution of $\mathrm{Der}_{\CC}(\Ss{X,x})$. By Proposition~\ref{Prop:2daconstruccion} we get $ \widetilde{\mathrm{Bl}_{\mathrm{Der}}(X)}_{\text{min}} \cong \Rs$. Furthermore, by Theorem~\ref{Theo:IntroBlowUpMMinAdapt} we know that $\mathrm{Bl}_{\mathrm{Der}}(X)$ is obtained by contracting some divisors of $\Rs$.

If $\mathrm{Bl}_{\mathrm{Der}}(X)$  is smooth, we are done. Suppose that $\mathrm{Bl}_{\mathrm{Der}}(X)$ has singularities. By~\cite[Proposition~4.2]{Gus}, $\mathrm{Bl}_{\mathrm{Der}}(X)$ has only normal singularities. Now we prove that $\mathrm{Bl}_{\mathrm{Der}}(X)$ has only rational singularities. This follows from the 
Leray spectral sequence applied to the composition $\pi = f_1 \circ \rho_1$:
\begin{equation}
    E_2^{p,q} = R^p {f_1}_*\!\left(R^q {\rho_1} _*\mathcal{O}_{\widetilde{\mathrm{Bl}_{\mathrm{Der}}(X)}}\right) 
\Rightarrow R^{p+q}\pi_*\mathcal{O}_{\widetilde{X}}.
\end{equation}
Since $X$ has rational singularities, we get $R^1\pi_*\mathcal{O}_{\widetilde{X}} = 0$. 
The five-term exact sequence of the Leray spectral sequence is
\begin{equation}\label{eq:leray}
0 \to R^1 {f_1}_*\mathcal{O}_{\mathrm{Bl}_{\mathrm{Der}}(X)} 
\to R^1\pi_*\mathcal{O}_{\widetilde{X}} 
\to {f_1}_*\!\left(R^1 {\rho_1}_*\mathcal{O}_{\widetilde{X}}\right) 
\to R^2{f_1}_*\mathcal{O}_{\mathrm{Bl}_{\mathrm{Der}}(X)} 
\to R^2\pi_*\mathcal{O}_{\widetilde{X}}.
\end{equation}
We have the following:
\begin{itemize}
\item $R^i\pi_*\mathcal{O}_{\widetilde{X}} = 0$ for $i > 0$, since $X$ has rational singularities.
\item $R^2{f_1}_*\mathcal{O}_{\mathrm{Bl}_{\mathrm{Der}}(X)} = 0$, since $f_1$ is a proper 
birational morphism between surfaces and its fibers have dimension at most one.
\end{itemize}
Substituting into \eqref{eq:leray} yields ${f_1}_*\!\left(R^1 {\rho_1}_*\mathcal{O}_{\widetilde{X}}\right) = 0$. Since $R^1{\rho_1}_*\mathcal{O}_{\widetilde{X}}$ is a skyscraper sheaf supported at the 
isolated singular points of $\mathrm{Bl}_{\mathrm{Der}}(X)$, and the stalk of its 
direct image at any point equals its own stalk, we conclude 
$R^1 {\rho_1}_*\mathcal{O}_{\widetilde{X}} = 0$, i.e., 
$\mathrm{Bl}_{\mathrm{Der}}(X)$ has rational singularities. Denote by $\mathcal{D}_2:=\Ders_\mathbb{C}(\Ss{\mathrm{Bl}_{\mathrm{Der}}(X)})$, the sheaf of derivations of $\mathrm{Bl}_{\mathrm{Der}}(X)$.
By rationality, we get that $\mathcal{D}_2$ is not locally free. Denote by $\mathrm{Bl}_{\mathcal{D}_2}(\mathrm{Bl}_{\mathrm{Der}}(X))$ the blow-up of $\mathrm{Bl}_{\mathrm{Der}}(X)$ at $\mathcal{D}_2$. We get the following diagram
\begin{equation*}
\begin{tikzpicture}
  \matrix (m)[matrix of math nodes,
    nodes in empty cells,text height=2ex, text depth=0.25ex,
    column sep=3.5em,row sep=3em] {\widetilde{\mathrm{Bl}_{\mathcal{D}_2}(\mathrm{Bl}_{\mathrm{Der}}(X))}_{\text{min}} &
    \widetilde{\mathrm{Bl}_{\mathrm{Der}}(X)}_{\text{min}} & \Rs\\
    \mathrm{Bl}_{\mathcal{D}_2}(\mathrm{Bl}_{\mathrm{Der}}(X)) &\mathrm{Bl}_{\mathrm{Der}}(X) & X\\
};
\draw[-stealth] (m-1-2) edge node[auto]{$\cong$} (m-1-3);
\draw[-stealth] (m-2-2) edge node[below]{$f_1$} (m-2-3);
\draw[-stealth] (m-2-1) edge node[below]{$f_2$} (m-2-2);

\draw[-stealth] (m-1-1) edge node[left]{$\rho_2$} (m-2-1);
\draw[-stealth] (m-1-2) edge node[left]{$\rho_1$} (m-2-2);
\draw[-stealth] (m-1-3) edge node[right]{$\pi$} (m-2-3);
\end{tikzpicture}
\end{equation*}
where $\rho_2 \colon \widetilde{\mathrm{Bl}_{\mathcal{D}_2}(\mathrm{Bl}_{\mathrm{Der}}(X))}_{\text{min}} \to \mathrm{Bl}_{\mathcal{D}_2}(\mathrm{Bl}_{\mathrm{Der}}(X))$ is the minimal resolution. Analogously, as in the first blow-up, we get $\widetilde{\mathrm{Bl}_{\mathcal{D}_2}(\mathrm{Bl}_{\mathrm{Der}}(X))}_{\text{min}} \cong \Rs$. Again, the variety $\mathrm{Bl}_{\mathcal{D}_2}(\mathrm{Bl}_{\mathrm{Der}}(X))$ is obtained by contracting some divisors of $\Rs$ (different from the divisors of the first blow-up).

Since $\Rs$ has a finite number of divisors and every blow-up eliminates some divisors, this process must terminate after finitely many steps. This proves the theorem.
\end{proof}

\begin{remark}
Let $(X,x)$ be the germ of a normal surface singularity. Suppose that $(X,x)$ has a rational singularity. Let $\pi\colon \Rs_{min} \to X$ be the minimal resolution of $X$. Suppose that the exceptional divisor $E$ has $n$ irreducible components. By Theorem~\ref{th:Main.rational}, after a finite number of blow-ups at the module of derivations, we get a smooth variety. Suppose that we need $m$ blow-ups. Then $m \leq n$. Indeed,  by Theorem~\ref{Theo:IntroBlowUpMMinAdapt} we know that each blow-up is obtained by contracting some divisors of $\Rs$. If the module of derivations is special and indecomposable, then by~\cite{Wu} we have to contract all divisors except one. If not, then we have to contract all but at least one divisor. 

Therefore, in the case of rational surface singularities, at most $n$ blow-ups at the module of derivations are needed to resolve the singularity. This bound is
generally not sharp, and the deviation from sharpness has two independent sources. First,
$\mathrm{Der}(\mathcal{O}_X)$ may decompose as a direct sum,
in which case the associated full sheaf detects, at each step, the components singled out by
all of its summands at once. Second, by \cite{Wu} the correspondence between irreducible components of $E$ and indecomposable reflexive modules holds only for the special indecomposable modules. Thus, even if $\mathrm{Der}(\mathcal{O}_X)$ is indecomposable, if it is not special, its first Chern class may intersect several exceptional components simultaneously. Both phenomena reduce the number of blow-ups actually required. For the singularity $A_n$, the module of derivations splits as a direct sum of two special reflexive modules of rank one; by \cite{Wu} each of these detects exactly one exceptional component, so each blow-up detects two components and the process terminates after $\lceil n/2 \rceil$ blow-ups (see Corollary~\ref{Cor:An}) rather than the $n$ given by the general bound.
\end{remark}

\section{Blow-up of log-canonical surface singularities}\label{Sec:LogC}
In this section, we study the blow-up of the module of derivations for log-canonical surface singularities. Let $(X,x)$ be a log-canonical surface singularity. Let $\pi \colon \Rs \to X$ be the minimal resolution. It is known that any log-canonical singularity is either rational, cusp or simple elliptic (see e.g. \cite[Example 6.3.33(d)]{NeBook}). By Theorem~\ref{th:Main.rational}, we only need to study the case of cusps and simple elliptic singularities. Recall that the Zariski-Lipman conjecture is true for log-canonical singularities (see \cite{Graf}).

\subsection{(Generalized) Simple elliptic singularities}
Let $(X,x)$ be a simple elliptic singularity of type $El(b)$, that is, a normal
surface singularity for which the exceptional curve $E$ in the minimal resolution $\pi \colon \Rs_{min} \to X$ is a smooth elliptic curve with self-intersection number $E^2 = -b$ with $b > 1$ (see e.g. \cite[Example 7.2.18]{NeBook}).

Since $(X,x)$ is a simple elliptic singularity, it is quasihomogeneous. Therefore, by \cite[Theorem 2.8]{KBehn} the Auslander module is isomorphic to $\left(\Omega^1_X\right)^{\smvee \smvee}$. Now, by \cite[pp. 216]{KBehn} the sheaf $\pi^* \left(\Omega^1_X\right)^{\smvee \smvee} / \mathrm{tor}$ is locally free. Therefore, $\Rs_{min}$ is the minimal adapted resolution of $\left(\Omega^1_X\right)^{\smvee \smvee}$. Since the exceptional divisor is irreducible, by Theorem~\ref{Theo:IntroBlowUpMMinAdapt} the normalization of the blow-up of $X$ at $\left(\Omega^1_X\right)^{\smvee \smvee}$ is the minimal resolution of $X$. We have proved the following theorem.

\begin{theorem}
    Let $(X,x)$ be a simple elliptic singularity of type $El(b)$. Then, the normalization of the blow-up of $X$ at $\left(\Omega^1_X\right)^{\smvee \smvee}$ is the minimal resolution of $X$.
\end{theorem}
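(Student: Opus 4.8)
The overall strategy is to present $\Rs_{min}$ as the minimal adapted resolution of the reflexive module $M:=\left(\Omega^1_X\right)^{\smvee\smvee}$ and then invoke Theorem~\ref{Theo:IntroBlowUpMMinAdapt}. That theorem reduces the claim to two tasks: (i) showing that the minimal resolution already carries the globally generated full sheaf of $M$, and (ii) checking that $c_1$ of this full sheaf does not vanish against the exceptional curve. Since $E$ is irreducible, task (ii) is the whole game: a vanishing intersection would contract $E$ and return the normal surface $X$, so the nonvanishing is exactly what forces the blow-up to recover $\Rs_{min}$.

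For the setup I would first recall that a simply elliptic singularity of type $El(b)$ is quasihomogeneous, which lets me apply the quasihomogeneous machinery. By Remark~\ref{rmk:hyp}, or more generally by \cite[Theorem~2.8]{KBehn}, this identifies $M = \left(\Omega^1_X\right)^{\smvee\smvee}$ with the Auslander (fundamental) module, a reflexive $\Ss{X}$-module of rank $2$; reflexivity is in any case guaranteed by Remark~\ref{rmk:Der.refl}. Next, using the fact that $\pi^*M/\mathrm{tor}$ is locally free on $\Rs_{min}$ (cf. \cite[pp.~216]{KBehn}), I would identify the full sheaf $\mathcal{M}:=(\pi^*M)^{\smvee\smvee}$ with $\pi^*M/\mathrm{tor}$ and verify, via the criterion of Proposition~\ref{fullcondiciones}, that $\mathcal{M}$ is generated by global sections on $\Rs_{min}$. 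By Definition~\ref{def:minadap} this means $\Rs_{min}$ is the minimal adapted resolution of $M$, which is the hypothesis needed to apply Theorem~\ref{Theo:IntroBlowUpMMinAdapt}.

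The main obstacle is task (ii), the intersection computation, and it is genuinely delicate. Writing $c_1(\mathcal{M}) = K_{\Rs} + mE$ for some integer $m$ --- legitimate because $\mathcal{M}$ and $\Omega^1_{\Rs_{min}}$ agree off the single exceptional component --- adjunction on the elliptic curve $E$ (genus $g(E)=1$) gives $K_{\Rs}\cdot E = 2g(E)-2-E^2 = b$, so that $c_1(\mathcal{M})\cdot E = b(1-m)$. The danger is precisely that a logarithmic correction ($m=1$, i.e. $\mathcal{M}\cong\Omega^1_{\Rs_{min}}(\log E)$) would make this vanish and collapse the blow-up back to $X$. Thus the heart of the proof is to pin down $m$ from the explicit description of $M$ in the quasihomogeneous case (again via \cite{KBehn}), showing $\mathcal{M}\cong\Omega^1_{\Rs_{min}}$ and hence $m=0$. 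Then $c_1(\mathcal{M})\cdot E = b > 1 \neq 0$, no component is contracted, and Theorem~\ref{Theo:IntroBlowUpMMinAdapt} yields that the normalization of $\mathrm{Bl}_M(X)$ is exactly $\Rs_{min}$, the minimal resolution of $X$.
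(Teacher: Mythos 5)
Your proposal is correct, and its skeleton is the same as the paper's: quasihomogeneity identifies $\left(\Omega^1_X\right)^{\smvee\smvee}$ with the Auslander module (Behnke, Thm.~2.8), local freeness of $\pi^*\left(\Omega^1_X\right)^{\smvee\smvee}/\mathrm{tor}$ on $\Rs_{min}$ (Behnke, p.~216) makes $\Rs_{min}$ the minimal adapted resolution, and Theorem~\ref{Theo:IntroBlowUpMMinAdapt} finishes. The genuine difference is at the last step, and it is in your favor. The paper concludes directly ``since the exceptional divisor is irreducible,'' which silently assumes $c_1(\Sf{M})\cdot E\neq 0$; Theorem~\ref{Theo:IntroBlowUpMMinAdapt} by itself still allows the degenerate outcome in which the unique component $E$ is contracted and the normalized blow-up is $X$ again. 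You correctly isolate this as the case $\Sf{M}\cong\Omega^1_{\Rs_{min}}(\log E)$, and your adjunction computation $c_1(\Sf{M})\cdot E=b(1-m)$ is the right way to see the dichotomy. Two remarks on the points you leave open. First, a small misattribution: Proposition~\ref{fullcondiciones} characterizes fullness, not global generation; what you actually want is that $\pi^*M/\mathrm{tor}$ is globally generated automatically (it is a quotient of $\pi^*M$), so once Behnke gives local freeness it coincides with the full sheaf and global generation is free. Second, your key assertion $m=0$ (i.e., $\Sf{M}\cong\Omega^1_{\Rs_{min}}$) is true --- concretely, every global section of $\Omega^1_{\Rs_{min}}(\log E)$ has vanishing residue along $E$, so the subsheaf generated by the pullback of the module is $\Omega^1_{\Rs_{min}}$ itself --- but you defer it entirely to Behnke. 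You can instead close it with a soft argument that uses no computation: if $c_1(\Sf{M})\cdot E=0$, then by Theorem~\ref{Theo:IntroBlowUpMMinAdapt} the normalized blow-up equals $X$, so $\mathrm{Bl}_M(X)\to X$ is a finite birational morphism admitting a section, hence an isomorphism; Theorem~\ref{Th:BlowUpM} then forces $M$ to be free, and a free Auslander module together with the fundamental sequence $0\to\Ss{X,x}\to A\to\mathfrak{m}\to 0$ (recall simple elliptic singularities are Gorenstein) gives $\mathfrak{m}$ projective dimension at most one, so $\Ss{X,x}$ is regular --- contradicting that $(X,x)$ is singular. With either closure, your write-up is a more complete version of the paper's proof rather than a different one.
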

\begin{remark}
    It is expected that a similar statement holds for the blow-up at the module of derivations, although this case has not yet been established.
\end{remark}
\subsection{Cusp singularities}
Let $(X,x)$ be a cusp singularity, i.e., the minimal resolution graph is a cyclic graph with $g(E_j) = \delta(E_j) = 0$ with $j=1,2,\dots,r$ with $r \geq 3$, where $\delta$ denotes the delta invariant (see e.g. \cite[Example 7.2.19]{NeBook} or \cite[4.21. Cusp Singularities]{Ne}). By \cite[Proof of 2.3]{KBehn}, the module of Zariski differentials, i.e., $\left(\Omega^1_X\right)^{\smvee \smvee}$ is the direct sum of two reflexive modules $L_1$ and $L_2$ of
rank one:
\begin{equation*}
    \left(\Omega^1_X\right)^{\smvee \smvee} = L_1 \oplus L_2,\quad \quad
    \mathrm{Der}_{\CC}(\Ss{X,x}) =  L_1^{\smvee} \oplus L_2^{\smvee}.
\end{equation*}
Under these assumptions, we have our main theorem.
\begin{theorem}
\label{th.main.cusp}
    The normalization of the blow-up of $\mathrm{Der}_{\CC}(\Ss{X,x})$ is dominated by the minimal resolution of $X$. The same statement is true for the module of Zariski differentials.
\end{theorem}
\begin{proof}
Let $\pi \colon \tilde{X} \to X$ be the minimal resolution. First, we will prove that $\tilde{X}$ is the minimal adapted resolution of $\mathrm{Der}_{\CC}(\Ss{X,x})$ and $\left(\Omega^1_X\right)^{\smvee \smvee}$. 

We will only consider the case of $\mathrm{Der}_{\CC}(\Ss{X,x})$, an analogous proof works for $\left(\Omega^1_X\right)^{\smvee \smvee}$. We need to prove that $\left( \pi^* \mathrm{Der}_{\CC}(\Ss{X,x}) \right)^{\smvee \smvee}$ is generated by global sections. Since pullback and direct sum commute, it is enough to prove that $\mathcal{L}_1:=\left( \pi^* L_1 \right)^{\smvee \smvee}$ is generated by global sections (the same proof works for $L_2$). By Remark \ref{remark:Son.iguales} we need to prove that $\Sf{L}_1$ does not have any base point. The proof proceeds by contradiction: Suppose that there exist $p_1,\dots,p_n$ base points for $\mathcal{L}_1$.  Recall that $Z_K=Z_{\mathrm{min}}=E$ (see~\cite[Example 7.2.19]{NeBook}).

The proof follows by cases and subcases:
\begin{description}
    \item[Case 1] Suppose that there are at least two base points $p_1$ and $p_2$. Denote by $\rho \colon \Rs' \to \Rs$ the blow-up at $p_1,p_2$. Therefore, we have the following commutative diagram
\begin{equation*}
\begin{tikzpicture}
  \matrix (m)[matrix of math nodes,
    nodes in empty cells,text height=1.5ex, text depth=0.25ex,
    column sep=2.5em,row sep=2em]{
  \Rs' & \Rs \\
  & X \\};
\draw[-stealth] (m-1-1) edge node [above] {$\rho$} (m-1-2);
\draw[-stealth] (m-1-1) edge node [right] {$\pi'$} (m-2-2);
\draw[-stealth] (m-1-2) edge node [right] {$\pi$} (m-2-2);
\end{tikzpicture}
\end{equation*}
Let $\Sf{L}_1'=(\pi'^*L_1)^{\smvee\smvee}$ the full $\Ss{\tilde{X}'}$-module associated to $L_1$. By \cite[Lemma 3.32]{BoRo}, the new divisors $E_{p_1}$ and $E_{p_2}$ of $\Rs'$ do not contain either zeros or poles of the Gorenstein form. Thus, $\Rs'$ is the minimal adapted resolution of $L_1$ (see~\cite[Proposition 3.33, 4)]{BoRo} and \cite[Lemma 3.32]{BoRo}, or the proof of \cite[Proposition 4.1]{BoRo}). By \cite[Proposition 4.15]{BoRo}, we get a contradiction since $L_1$ is indecomposable (since it is of rank one) and a generic global section of $\Sf{L}_1'$ would hit the divisors $E_{p_1}$ and $E_{p_2}$.
\item[Case 2] There is only one base point $p$. Since 
\begin{equation*}
    \Ext^1_{\Rs}(\Sf{L}_1,\Sf{L}_1)\cong \Ext^1_{\Rs}(\Ss{\Rs},\Sf{L}_1\otimes \Sf{L}_1^{\smvee}) \cong \Ext^1_{\Rs}(\Ss{\Rs},\Ss{\Rs})\cong H^1(\Rs,\Ss{\Rs})\cong \mathbb{C},
\end{equation*}
there exists a non-trivial element of $\Ext^1_{\Rs}(\Sf{L}_1,\Sf{L}_1)$. Denote it by
\begin{equation}
\label{eq:Def.L1}
    0 \to \Sf{L}_1 \to \mathcal{H} \to \Sf{L}_1 \to 0.
\end{equation}
We claim that $\Sf{H}$ is a full sheaf. Let us prove this assertion using Proposition~\ref{fullcondiciones}. By \eqref{eq:Def.L1}, we get $\Sf{H}\cong \Sf{H}^{\smvee \smvee}$. Therefore, $\Sf{H}$ is reflexive. Since $\Rs$ is a smooth surface, any reflexive sheaf is locally free (see \cite[Corollary 1.4]{Har3}). Therefore,  $\Sf{H}$ is locally free. By the proof of \cite[Theorem 4.1]{NeRo}, we get 
\begin{equation}
\label{eq:H1.L1.0}
    H^1(\Rs, \Sf{L}_1)=H_E^1(\Rs, \Sf{L}_1)=0.
\end{equation}
Therefore, $H^1(\Rs, \Sf{H})=H_E^1(\Rs, \Sf{H})=0$. Thus, we only need to prove that $\Sf{H}$ is generically generated by global sections. By \eqref{eq:Def.L1} and using \eqref{eq:H1.L1.0}, we get an exact sequence
\begin{equation}
\label{eq:Def.L1.H}
    0 \to H^0(\Rs,\Sf{L}_1) \xrightarrow{\phi} H^0(\Rs,\mathcal{H}) \xrightarrow{\psi} H^0(\Rs,\Sf{L}_1) \to 0.
\end{equation}
Let $l_1, \dots, l_r$ be a collection of sections of $ H^0(\Rs,\Sf{L}_1)$ that generate $\Sf{L}_1$ everywhere except at the point $p$. Denote by $l'_1, \dots, l'_r$ sections of $H^0(\Rs,\mathcal{H})$ such that $\psi(l'_j)=l_j$ for any $j=1,\dots,r$. Now, consider the following commutative diagram
\begin{equation*}
\begin{tikzpicture}
  \matrix (m)[matrix of math nodes,
    nodes in empty cells,text height=2ex, text depth=0.25ex,
    column sep=3.5em,row sep=3em] {0&\Ss{\Rs}^r & \Ss{\Rs}^{2r}& \Ss{\Rs}^r&0\\
    0&\Sf{L}_1 & \Sf{H}& \Sf{L}_1&0\\
};
\draw[-stealth] (m-1-1) edge node[auto]{} (m-1-2);
\draw[-stealth] (m-1-2) edge node[auto]{} (m-1-3);
\draw[-stealth] (m-1-3) edge node[auto]{} (m-1-4);
\draw[-stealth] (m-1-4) edge node[auto]{} (m-1-5);

\draw[-stealth] (m-2-1) edge node[below]{} (m-2-2);
\draw[-stealth] (m-2-2) edge node[above ]{$\phi$} (m-2-3);
\draw[-stealth] (m-2-3) edge node[above ]{$\psi$} (m-2-4);
\draw[-stealth] (m-2-4) edge node[below]{} (m-2-5);

\draw[-stealth] (m-1-2) edge node[left]{$\mathbf{l}$} (m-2-2);
\draw[-stealth] (m-1-3) edge node[right]{$\mathbf{\phi(l),l'}$} (m-2-3);
\draw[-stealth] (m-1-4) edge node[left]{$\mathbf{l}$} (m-2-4);

\end{tikzpicture}
\end{equation*}
where the map $\mathbf{l}$ and $\mathbf{\phi(l),l'}$ are induced by the sections $l_1, \dots, l_r$ and $\phi(l_1), \dots, \phi(l_r),l'_1, \dots, l'_r$, respectively. Let $q$ be a point different from $p$. Since the map $\mathbf{l}$ is onto at the point $q$, by the snake lemma we get that the map $\mathbf{\phi(l),l'}$ is onto at the point $q$. Hence, the sheaf $\Sf{H}$ is generated by global sections everywhere except (possibly) at the point $p$. Therefore, $\Sf{H}$ is a full sheaf of rank $2$. Let $H:= \pi_* \Sf{H}$. 

Now we claim that the full sheaf $\Sf{H}$ is not generated by global sections at the point $p$. We prove the claim by contradiction. Suppose that $\Sf{H}$ is generated by global sections. Let $h_1\dots,h_k$ be sections of $H^0(\Rs,\mathcal{H})$ that generate $\Sf{H}$. Hence, we get the following diagram
\begin{equation*}
\begin{tikzpicture}
  \matrix (m)[matrix of math nodes,
    nodes in empty cells,text height=2ex, text depth=0.25ex,
    column sep=5.5em,row sep=3em] {\Ss{\Rs}^k& \Sf{H}&0\\
    \Ss{\Rs}^k& \Sf{L}_1&\\
        & 0&\\
};
\draw[-stealth] (m-1-1) edge node[auto]{\tiny $h_1,\dots,h_k$} (m-1-2);
\draw[-stealth] (m-1-2) edge node[auto]{} (m-1-3);
\draw[-stealth] (m-2-1) edge node[auto]{\tiny $\psi(h_1),\dots,\psi(h_k)$} (m-2-2);

\draw[-stealth] (m-1-1) edge node[left]{$=$} (m-2-1);
\draw[-stealth] (m-1-2) edge node[right]{$\psi$} (m-2-2);
\draw[-stealth] (m-2-2) edge node[left]{} (m-3-2);
\end{tikzpicture}
\end{equation*}
Therefore, $\Sf{L}_1$ is generated by the global sections $\psi(h_1),\dots,\psi(h_k)$. This is a contradiction since $p$ is a base point of $\Sf{L}_1$.

Hence, $\Sf{H}$ is a full sheaf of rank $2$ that is generated by global sections everywhere except at the point $p$.  Denote by $\rho \colon \Rs' \to X$ the blow-up at $p$. Therefore, we have the following commutative diagram
\begin{equation*}
\begin{tikzpicture}
  \matrix (m)[matrix of math nodes,
    nodes in empty cells,text height=1.5ex, text depth=0.25ex,
    column sep=2.5em,row sep=2em]{
  \Rs' & \Rs \\
  & X \\};
\draw[-stealth] (m-1-1) edge node [above] {$\rho$} (m-1-2);
\draw[-stealth] (m-1-1) edge node [right] {$\pi'$} (m-2-2);
\draw[-stealth] (m-1-2) edge node [right] {$\pi$} (m-2-2);
\end{tikzpicture}
\end{equation*}
Denote by $E_p$ the exceptional divisor of the blow-up at $p$. In the resolution $\Rs'$ the Gorenstein form does not have zeros or poles on $E_p$, thus $\Rs'$ is the minimal adapted resolution of $L_1$ and also of $H$. Hence, $L_1$ and $H$ are special $\Ss{X}$-modules with the same minimal adapted resolution. If $\Sf{H}$ is indecomposable, we get a contradiction with \cite[Corollary 6.12]{BoRo}. The other possibility is that $\Sf{H}$ decomposes into a direct sum of two full sheaves. Since $\Rs'$ is the minimal adapted resolution of $L_1$ and also of $H$  and by \cite[Theorem 6.10]{BoRo}, we get that $\Sf{H}\cong \Sf{L}_1\oplus\Sf{L}_1$. Hence, the exact sequence \eqref{eq:Def.L1} is actually
\begin{equation}
\label{eq:Def.L2}
    0 \to \Sf{L}_1 \stackrel{\phi}{\longrightarrow} \Sf{L}_1\oplus\Sf{L}_1 \stackrel{\psi}{\longrightarrow} \Sf{L}_1 \to 0.
\end{equation}
Applying the functor $- \otimes \Sf{L}_1^{\smvee}$ we get the exact sequence \eqref{eq:Def.L2}
\begin{equation}
\label{eq:Def.L3}
    0 \to \Ss{\Rs} \stackrel{\phi}{\longrightarrow} \Ss{\Rs}\oplus\Ss{\Rs} \stackrel{\psi}{\longrightarrow} \Ss{\Rs} \to 0.
\end{equation}
Note that we abuse the notation. The morphisms of \eqref{eq:Def.L3} are actually $\phi \otimes 1$ and $\psi \otimes 1$. Applying the functor $\pi_* -$ to the exact sequence \eqref{eq:Def.L3} we get the exact sequence
\begin{equation}
\label{eq:Def.L4}
    0 \to \Ss{X} \stackrel{\phi}{\longrightarrow} \Ss{X}\oplus\Ss{X} \stackrel{\psi}{\longrightarrow} \Ss{X} \to R^1\pi_* \Ss{X} \stackrel{\phi}{\longrightarrow} R^1\pi_* \left(\Ss{X}\oplus\Ss{X}\right) \stackrel{\psi}{\longrightarrow} R^1\pi_* \Ss{X} \to 0.
\end{equation}
Recall that,
\begin{equation*}
     R^1\pi_* \Ss{X} \cong H^1(\Rs, \Ss{X}) \cong \mathbb{C} \quad \text{and} \quad  R^1\pi_* \left(\Ss{X}\oplus\Ss{X}\right) \cong \mathbb{C}^2.
\end{equation*}
Thus, the long exact sequence \eqref{eq:Def.L4} becomes
\begin{equation}
\label{eq:Def.L5}
    0 \to \Ss{X} \stackrel{\phi}{\longrightarrow} \Ss{X}\oplus\Ss{X} \stackrel{\psi}{\longrightarrow} \Ss{X} \to 0.
\end{equation}
Since the exact sequence \eqref{eq:Def.L5} is an element of $\Ext^1(\Ss{X},\Ss{X})$ and this group is trivial, there exists an  isomorphism $\theta \colon \Ss{X}\oplus\Ss{X} \to \Ss{X}\oplus\Ss{X}$ such that
\begin{equation*}
\begin{tikzpicture}
  \matrix (m)[matrix of math nodes,
    nodes in empty cells,text height=2ex, text depth=0.25ex,
    column sep=3.5em,row sep=3em] {
    0&\Ss{X} & \Ss{X}\oplus\Ss{X}& \Ss{X}&0\\
    0&\Ss{X} & \Ss{X}\oplus\Ss{X}& \Ss{X}&0\\
};
\draw[-stealth] (m-1-1) edge node[auto]{} (m-1-2);
\draw[-stealth] (m-1-2) edge node[auto]{} (m-1-3);
\draw[-stealth] (m-1-3) edge node[auto]{} (m-1-4);
\draw[-stealth] (m-1-4) edge node[auto]{} (m-1-5);

\draw[-stealth] (m-2-1) edge node[below]{} (m-2-2);
\draw[-stealth] (m-2-2) edge node[above ]{$\phi$} (m-2-3);
\draw[-stealth] (m-2-3) edge node[above ]{$\psi$} (m-2-4);
\draw[-stealth] (m-2-4) edge node[below]{} (m-2-5);

\draw[-stealth] (m-1-2) edge node[left]{$=$} (m-2-2);
\draw[-stealth] (m-1-3) edge node[right]{$\theta$} (m-2-3);
\draw[-stealth] (m-1-4) edge node[left]{$=$} (m-2-4);

\end{tikzpicture}
\end{equation*}
By the isomorphism $\theta$, we get that the exact sequence \eqref{eq:Def.L5} is the trivial extension. Therefore, \eqref{eq:Def.L2} is also the trivial extension. This is a contradiction since we started with a non-trivial extension \eqref{eq:Def.L1}.
\end{description}
Therefore, the only possibility is that $\Sf{L}_1$ does not have any base points. By Remark~\ref{remark:Son.iguales}, $\Rs$ is the minimal adapted resolution of $L_1$. Thus, by Theorem~\ref{Theo:IntroBlowUpMMinAdapt} the normalization of the blow-up of $X$ at $L_1$ is dominated by $\Rs$. This proves the theorem.
\end{proof}
\begin{corollary}

Let $X$ be a cusp singularity. Then the blow-up of $X$ at $\mathrm{Der}_{\CC}(\Ss{X})$, followed by normalization, is either smooth or has only quotient (rational) singularities. In the latter case, a further finite number of blow-ups at the module of derivations resolves $X$.
\end{corollary}
\begin{proof}
Denote by $\mathrm{NBl}_{\mathrm{Der}}(X)$ the normalization of the blow-up of $X$ at $\mathrm{Der}_{\CC}(\Ss{X,x})$. If $\mathrm{NBl}_{\mathrm{Der}}(X)$ is smooth, we are done. Suppose that $\mathrm{NBl}_{\mathrm{Der}}(X)$  is not smooth. By Proposition~\ref{Prop:2daconstruccion} and Theorem~\ref{th.main.cusp}, we get the following diagram
    \begin{equation*}
\begin{tikzpicture}
  \matrix (m)[matrix of math nodes,
    nodes in empty cells,text height=2ex, text depth=0.25ex,
    column sep=3.5em,row sep=3em] {
    \widetilde{\mathrm{NBl}_{\mathrm{Der}}(X)}_{\text{min}} & \Rs\\
    \mathrm{NBl}_{\mathrm{Der}}(X) & X\\
};
\draw[-stealth] (m-1-1) edge node[below]{$\cong$} (m-1-2);
\draw[-stealth] (m-2-1) edge node[below]{$f_1$} (m-2-2);
\draw[-stealth] (m-1-1) edge node[left]{$\rho_1$} (m-2-1);

\draw[-stealth] (m-1-2) edge node[right]{$\pi$} (m-2-2);
\end{tikzpicture}
\end{equation*}
where $\widetilde{\mathrm{NBl}_{\mathrm{Der}}(X)}_{\text{min}}$ is the minimal resolution of $\mathrm{NBl}_{\mathrm{Der}}(X)$. Let $y$ be a singular point of $\mathrm{NBl}_{\mathrm{Der}}(X)$. Since $\widetilde{\mathrm{NBl}_{\mathrm{Der}}(X)}_{\text{min}} \cong \Rs$ and all the components of $E$ form a cycle of rational smooth curves, then $\rho^{-1}(y)$ is a chain of rational smooth curves. By~\cite[Theorem 7.4.17]{Ishi}, $y$ is a quotient singularity. Hence $\mathrm{NBl}_{\mathrm{Der}}(X)$ has only quotient singularities. Thus, the result follows from Theorem~\ref{th:Main.rational}.
\end{proof}
\section*{Acknowledgments} We thank Professor Daniel Duarte and Professor Mark Spivakovsky for useful conversations and for encouraging this work. We thank the referees for their suggestions. The first named author was supported by Universidad Nacional Autónoma de México Postdoctoral Program (POSDOC). The first and second-named authors were supported by SECIHTI project CF-2023-G-33. The third-named author was supported by DGAPA PAPIIT IN112424 Complex Kleinian groups,  DGAPA PAPIIT IN117523 Singularidades de superficies complejas: modificaciones, resoluciones y curvas polares and DGAPA PAPIIT IA102626 Singularidades complejas y reales: Invariantes analíticos y topológicos. The authors were supported by Simons-UNAM Geometry Program at Cuernavaca
Simons Foundation International SFI-MPS-T-Institutes-00011977 JS.

\end{document}